  \CheckCommand*\refstepcounter[1]{\stepcounter{#1}%
      \protected@edef\@currentlabel
       {\csname p@#1\endcsname\csname the#1\endcsname}%
  }
  \renewcommand*\refstepcounter[1]{\stepcounter{#1}%
    \protected@edef\@currentlabel
      {\csname p@#1\expandafter\endcsname\csname the#1\endcsname}%
  }
  \def\labelformat#1{\expandafter\def\csname p@#1\endcsname##1}
  \DeclareRobustCommand\Ref[1]{\protected@edef\@tempa{\ref{#1}}%
     \expandafter\MakeUppercase\@tempa
  }
  \newcommand{\numberlike}[2]{%
     \expandafter\def\csname c@#1\endcsname{%
         \expandafter\csname c@#2\endcsname}%
  }
  \def\DefaultNumberTheoremWithin{section}
  \theoremstyle{plain}
  \newtheorem{Lemma}{Lemma}
     \numberwithin{Lemma}{\DefaultNumberTheoremWithin}
     \numberwithin{Claim}{\DefaultNumberTheoremWithin}
  \newtheorem{Theorem}{Theorem}
     \numberwithin{Theorem}{\DefaultNumberTheoremWithin}
  \newtheorem{Corollary}{Corollary}
     \numberwithin{Corollary}{\DefaultNumberTheoremWithin}
  \newtheorem{Proposition}{Proposition}
     \numberwithin{Proposition}{\DefaultNumberTheoremWithin}
  \newtheorem{Conjecture}{Conjecture}
     \numberwithin{Conjecture}{\DefaultNumberTheoremWithin}
  \theoremstyle{definition}
  \newtheorem{Definition}{Definition}
     \numberwithin{Definition}{\DefaultNumberTheoremWithin}
  \theoremstyle{definition}
     \numberwithin{Question}{\DefaultNumberTheoremWithin}
  \theoremstyle{definition}
     \numberwithin{Problem}{\DefaultNumberTheoremWithin}
  \theoremstyle{remark}
     \numberwithin{Remark}{\DefaultNumberTheoremWithin}
  \theoremstyle{remark}
  \newtheorem{Example}{Example}
     \numberwithin{Example}{\DefaultNumberTheoremWithin}
     \numberwithin{Case}{Lemma}
     \numberwithin{Step}{Lemma}
  \def\eqref{\ref}
  \def\Ker{\mathrm{Ker}}
  \def\Im{\mathrm{Im}}
  \def\sgn{\mathrm{sign}}
  \def\rank{\mathrm{rank}\,}
  \def\Hom{\mathcal{H}}
  \def\Path{{\normalfont {\textrm{Path}}}}
  \def\Cube{{\normalfont {\textrm{Cube}}}}
  \def\Sing{{\normalfont {\textrm{Sing}}}}
  \def\Clique{{\normalfont {\textrm{Clique}}}}
  \def\CC{{\mathcal{C}}}
  \def\LL{{\mathcal{L}}}
   \def\DD{{\mathcal{D}}}
\begin{document}

\title
{Discrete Cubical and Path Homologies of Graphs}

\author[Barcelo]{H\'{e}l\`{e}ne Barcelo}

\address[H\'{e}l\`{e}ne Barcelo]
{
The Mathematical Sciences Research Institute, 17 Gauss Way, Berkeley, CA 94720, USA
}
\email{hbarcelo@msri.org}

\author[Greene]{Curtis Greene}

\address[Curtis Greene]
{
Haverford College, Haverford, PA 19041, USA
}
\email{cgreene@haverford.edu}

\author[Jarrah]{Abdul Salam Jarrah}

\address[Abdul Jarrah]
{
Department of Mathematics and Statistics, American University of Sharjah, PO Box 26666, Sharjah, United Arab Emirates
}
\email{ajarrah@aus.edu}

\author[Welker]{Volkmar Welker}

\address[Volkmar Welker]
{
Fachbereich Mathematik und Informatik, Philipps-Universit\"at, 35032 Marburg, Germany
}
\email{welker@mathematik.uni-marburg.de}

\thanks{This material is based upon work supported by the National Science Foundation under Grant No. DMS-1440140 while the authors were in residence at the
Mathematical Sciences Research Institute in Berkeley, California, USA}

\begin{abstract}
    In this paper we study and compare two homology theories for
(simple and undirected) graphs.
The first, which was developed by Barcelo, Caprano, and White, is based on graph maps from hypercubes 
to the graph. The second theory was developed by Grigor'yan, Lin, Muranov, and Yau, and
is based on paths in the graph. Results in  both settings imply that the respective homology groups are isomorphic in homological dimension one. 
We show that, for several infinite classes of graphs, the two theories lead to
isomorphic homology groups in all dimensions. However, we provide an example
for which the homology groups of the two theories are not isomorphic at least in dimensions two and 
three. We establish a natural map from the cubical to the path 
homology groups which is an isomorphism in
dimension one and surjective in dimension two. Again our example shows 
that in general the map is not surjective in dimension three and not 
injective in dimension two. In the process we develop tools to compute the homology groups for both theories in all dimensions.

\end{abstract}
\date{\today}

\maketitle

\section{Introduction}

For a simple finite undirected graph $G$, we study a discrete cubical singular homology 
theory $\Hom_\bullet^\Cube(G)$. This theory is a special case of the discrete cubical 
homology theory $DH_{\bullet,r}(X)$ that was  defined by Barcelo, Caprano and White 
\cite{BCW} for any metric space $X$ and any real number $r > 0$. Their work builds on a discrete homotopy theory for undirected graphs introduced earlier by Barcelo, Kramer, Laubenbacher, and Weaver in \cite{BKLW}. Later work by Babson, Barcelo, de Longueville, and Laubenbacher \cite{BBLL} connects this theory to classical homotopy theory of cubical sets and asks for a corresponding
homology theory. The homology theory
developed in \cite{BCW} is an answer to that question. 
A more general but closely related homotopy theory for directed graphs was developed by Grigor'yan, Lin, Muranov, and Yau  
in \cite{GLMY2}, which also introduces a corresponding homology theory based on directed paths. 
The homotopy theories in \cite{BBLL} and \cite{GLMY2} are identical when $G$ is undirected and from \cite{BCW} and \cite{GLMY2} it follows that the homology theories yield isomorphic homology groups in dimension $1$. In this paper we explore both the similarities and differences between the two homology theories, showing that they agree in all dimensions for many infinite classes of undirected graphs but disagree in general. Both theories differ markedly from classical singular/simplicial homology of graphs 
seen as $1$-dimensional complexes or their clique complexes. For example, when $G$ is a $4$-cycle, both the cubical and path homologies are trivial in all dimensions greater than zero.

In \ref{sec-definitions}, following \cite{BCW} and \cite{GLMY2}, we give precise definitions of both cubical and path homology for undirected graphs, and discuss the differences between these theories and classical simplicial homology of a graph (as a $1$-dimensional simplicial complex as well as of the clique complex of the graph). 
In \ref{sec-homotopyhomology} we give proofs that both cubical and path homology are preserved under homotopy equivalence, along lines that essentially appear in \cite{BCW} and \cite{GLMY2}. These results are used in \ref{sec-computations} to compute homology for a large number of examples, showing in the process that cubical and path homology agree in all of these cases. \ref{sec-map} constructs a natural homomorphism from $\Hom_\bullet^\Cube(G)$ to $\Hom_\bullet^\Path(G)$. We show that the homomorphism is an
isomorphism in dimension $0$ and $1$ and surjective in dimension $2$, hence fueling  speculation that this might explain the isomorphisms observed in \ref{sec-computations}. However, 
\ref{sec-counter} gives a counterexample:  a graph $G$ for which 
$\Hom_\bullet^\Cube(G) \not\cong \Hom_\bullet^\Path(G)$, and for this example the map defined in \ref{sec-map} is neither injective nor surjective.  \ref{sec-questions} suggests several natural questions for further study.

\section{Background: Discrete Homology of Graphs}
\label{sec-definitions}


Throughout the paper let $R$ denote a commutative ring with unit which shall be the ring of coefficients. 
For any positive integer $n$, let  
$[n] := \{1,\dots,n\}$. For graph theory definitions and terminology we refer the reader to \cite{Diestel00}.

\subsection*{Discrete Cubical Homology}

\begin{Definition}
  For $n \geq 1$, the discrete $n$-cube $Q_n$ is the graph whose vertex set $V(Q_n)$ is 
  $\{0,1\}^n := \{(a_1,\dots, a_n)\,|\,a_i \in \{0,1\} \mbox{ for all } i \in [n]\}$, 
  with an edge between two vertices $a$ and $b$ if and only if their Hamming distance 
  is exactly one, that is, there exists $i \in  [n]$ such that $a_{i} \neq b_{i}$ and 
  $a_{j} = b_{j}$ for all $j \neq i$. 
  For $n=0$, we define $Q_0$ to be the $1$-vertex graph with no edges.
\end{Definition}

\begin{Definition}
  Let $G$ and $H$ be simple graphs, i.e. undirected graphs without loops or multiple edges. A graph homomorphism $\sigma: G \longrightarrow H$ is a 
  map from $V(G)$ to $V(H)$ such that, if $\{a,b\}\in E(G)$ then either $\sigma(a)=\sigma(b)$ or $\{\sigma(a),\sigma(b)\}\in E(H)$.
\end{Definition}

\begin{Definition}
  Let $G$ be a simple graph, a graph homomorphism $\sigma: Q_n \longrightarrow G$ is
  called a \textit{singular} $n$-cube on $G$.
\end{Definition}

For each 
$n \geq 0$, let $\LL^\Cube_n(G)$ be the free $R$-module generated by all singular 
$n$-cubes on $G$.
For $n \geq 1$ and each $i \in [n]$, we define two face maps $f_i^+$ and $f_i^-$ from 
$\LL^\Cube_n(G)$ to $\LL^\Cube_{n-1}(G)$ such that, 
for $\sigma \in \LL^\Cube_n(G)$ and $(a_1,\dots,a_{n-1}) \in Q_{n-1}$:
\begin{eqnarray*}
  f_{i}^{+}\sigma(a_1,\dots,a_{n-1})&:=&\sigma(a_1,\dots,a_{i-1},1,a_i,\dots,a_{n-1}),  \\
  f_{i}^{-}\sigma(a_1,\dots,a_{n-1})&:=& \sigma(a_1,\dots,a_{i-1},0,a_i,\dots,a_{n-1}).
\end{eqnarray*}
For $n \geq 1$, a singular $n$-cube $\sigma$ is called \textit{degenerate} 
if $f_{i}^{+}\sigma = f_{i}^{-}\sigma$, for some $i \in [n]$. Otherwise, 
$\sigma$ is called \textit{non-degenerate}.  By definition every
$0$-cube is non-degenerate.


For each $n\geq 0$, let $\DD^\Cube_n(G)$ be the 
$R$-submodule of 
$\LL^\Cube_n(G)$ that is generated by all degenerate singular $n$-cubes, 
and let $\CC^\Cube_n(G)$ be the free $R$-module 
$\LL^\Cube_n(G)/\DD^\Cube_n(G)$, whose elements are called $n$-chains.
Clearly, the cosets of non-degenerate $n$-cubes freely generate $\CC^\Cube_n(G)$.

Furthermore, for each $n\geq 1$, define the boundary operator 
\[
\partial_n^\Cube :\LL^\Cube_n(G) \longrightarrow \LL^\Cube_{n-1}(G)
\]
such that, for each singular $n$-cube $\sigma$,
\[
\partial_n^\Cube(\sigma) = \sum_{i=1}^n (-1)^i \big( f_i^-\sigma - f_i^+\sigma \big)
\]
and extend linearly to all chains in $\LL^\Cube_n(G)$.
When there is no danger of confusion, we will 
abbreviate $\partial^\Cube_n$ as $\partial_n$. 
If one sets $\LL^\Cube_{-1} (G) =\DD^\Cube_{-1} (G)= (0)$ then one can define
$\partial_0^\Cube$ as the trivial map from $\LL_0^\Cube(G)$ to
$\LL_{-1}^\Cube(G)$.

It is easy to check that, for $n\geq 0$,
$\partial_n [\DD^\Cube_n(G)] \subseteq \DD^\Cube_{n-1}(G)$ 
and  $\partial_{n} \partial_{n+1} \sigma= 0$ 
(see \cite{BCW}). 
Hence,  
$\partial_n: \CC^\Cube_n(G) \longrightarrow \CC^\Cube_{n-1}(G)$ is a boundary operator,
and  $\CC^\Cube(G) = (\CC_\bullet^\Cube,\partial_\bullet)$ is a 
chain complex of free $R$-modules. 

\begin{Definition}
  For $n \geq 0$, denote by $\Hom^\Cube_n(G)$ the $n$\textsuperscript{th} 
  homology group of the chain complex $\CC^\Cube(G)$. In other words, 
  $\Hom^\Cube_n(G) := \Ker\,\partial_n/\Im\,\partial_{n+1}$.
\end{Definition}

We represent singular $n$-cubes $\sigma: Q_n \to G$ by sequences of length $2^n$, where
the $i$\textsuperscript{th} term is the value of $\sigma$ on the $i$\textsuperscript{th} vertex, and the vertices of $Q^n$ are indexed in colexicographic order. For example, if 
$G$ is defined as in \ref{fig2}, then the sequence
$(1,2,2,1,2,3,3,2)$ represents the singular $3$-cube with labels as illustrated in \ref{fig1}.
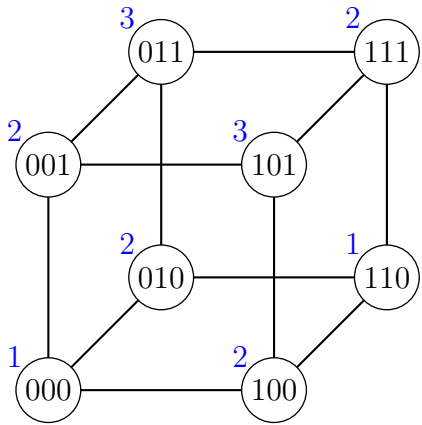
\begin{figure}
\begin{center}
\begin{tikzpicture}[scale=1.5]
   \Vertex[x=0 ,y=0,L=$000$]{1}
      \Vertex[x=2 ,y=0,L=$100$]{2}
         \Vertex[x=1 ,y=1,L=$010$]{3}
            \Vertex[x=3 ,y=1,L=$110$]{4}
               \Vertex[x=0 ,y=2,L=$001$]{5}
                  \Vertex[x=2 ,y=2,L=$101$]{6}
                     \Vertex[x=1 ,y=3,L=$011$]{7}
                        \Vertex[x=3 ,y=3,L=$111$]{8}
      \Edge(1)(2)  \Edge(1)(3)  \Edge(2)(4)
      \Edge(3)(4)  \Edge(5)(6)\Edge(5)(7)\Edge(6)(8)\Edge(7)(8)
      \Edge(1)(5)\Edge(2)(6)\Edge(3)(7)\Edge(4)(8)
      \node [color=blue] at ([shift={(-.3,.3)}]1) {1};
         \node  [color=blue] at ([shift={(-.3,.3)}]2) {2};
            \node  [color=blue] at ([shift={(-.3,.3)}]3) {2};
               \node  [color=blue] at ([shift={(-.3,.3)}]4) {1};
                  \node  [color=blue] at ([shift={(-.3,.3)}]5) {2};
                     \node  [color=blue] at ([shift={(-.3,.3)}]6) {3};
                        \node  [color=blue] at ([shift={(-.3,.3)}]7) {3};
                           \node  [color=blue] at ([shift={(-.3,.3)}]8) {2};
\end{tikzpicture}
\end{center}
\caption{Singular $3$-cube represented by $(1,2,2,1,2,3,3,2)$.}
\label{fig1}
\end{figure}
We represent each coset in $\CC_n^\Cube(G)$ by the unique coset representative in which all terms are non-degenerate.

\begin{Example} Let $G$ be a $4$-cycle, with vertices labeled cyclically, as illustrated in \ref{fig2}.
\begin{figure}
\begin{center}
\begin{tikzpicture}[scale=1.5]
   \Vertex[x=0 ,y=0]{1}
      \Vertex[x=1 ,y=0]{2}
         \Vertex[x=1 ,y=1]{3}
            \Vertex[x=0 ,y=1]{4}
      \Edge(1)(2)  \Edge(2)(3)  \Edge(3)(4)
      \Edge(4)(1)  
\end{tikzpicture}
\caption{Graph $G =$ $4$-cycle.}
\label{fig2}
\end{center}
\end{figure}
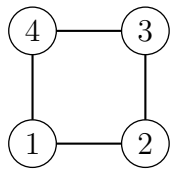
Then
\begin{gather*}
\CC_0^\Cube = \big\langle (1),(2),(3),(4) \big\rangle \\
\CC_1^\Cube = \big\langle (1,2), (2,1), (2,3), (3,2), (3,4), (4,3), (4,1), (1,4) \big\rangle \\
\CC_2^\Cube = \big\langle   (1,1,1,2), (1,1,1,4), \dots<60 \textrm{ more}>\dots,    (4,4,4,1), (4,4,4,3)     \big\rangle. \end{gather*}
The matrix of $\partial_1$ with respect to the above bases is the standard vertex-directed edge incidence matrix of the corresponding directed graph in which each edge is replaced by directed edges in both directions. An easy computation shows that $\partial_1$ has rank $|V|-1 = 4-1 = 3$  and nullity  $8-3 = 5$.  Cycles in $\CC_1^\Cube(G)$ correspond to circulations in $G$,
that is, weighted sums of edges in which the net flow out of each vertex equals zero.
A basis of $\CC_1^\Cube(G)$ may be obtained from any directed cycle basis of $G$, e.g., for the graph $G$ defined in \Ref{fig2} we may take $(1,2)+(2,1), (2,3)+(3,2), (3,4)+(4,3), (4,1)+(1,4)$, and $ (1,2)+(2,3)+(3,4)+(4,1)$.  Each of these $1$-cycles is the boundary of a $2$-chain:
\begin{gather*}
(1,2)+(2,1) = \partial_2((2,1,2,2))\\ (2,3)+(3,2) = \partial_2((3,2,3,3))\\ (3,4)+(4,3) = \partial_2((4,3,4,4))\\ (4,1)+(1,4) = \partial_2((1,4,1,1))\\
(1,2)+(2,3)+(3,4)+(4,1) = \partial_2((1,2,4,3) + (3,4,3,3) + (1,4,1,1)). 
\end{gather*}
Hence $\Hom_0^\Cube(G)= R$ and $\Hom_1^\Cube(G) = (0)$. By somewhat tedious computations one can also show that 
$\Hom_2^\Cube(G)=(0)$. Here we have $\rank(\CC_2^\Cube(G))=64$ and $\rank(\CC_3^\Cube(G))=2432$, and for higher dimensions the problem of computing $\Hom_n^\Cube(G)$ becomes increasingly more difficult.   Fortunately, we are able to prove  more general results (in \ref{sec-computations}) implying that $\Hom_n^\Cube(G)=(0)$ for all $n>0$, for the graph $G$ defined above in \ref{fig2}. 

\end{Example}

\subsection*{Discrete Path Homology}
In a series of papers \cite{GLMY1,GLMY2,GLMY3} a (co)homology and a homotopy theory
for directed graphs are developed. In these theories, an undirected graph is 
interpreted as the directed graph, with each undirected edge replaced by
two oppositely directed edges between its endpoints. 
It is shown in \cite[Thm. 4.22]{GLMY2} that the first homology group
of a directed graph is the abelianization of its fundamental group, where
both homology and homotopy groups are taken in the sense of \cite{GLMY1,GLMY2}.

We now recall the homology theory from \cite{GLMY1}, confining ourselves
to the setting of simple (undirected) graphs.

Let $V$ be a finite set. For $n\ge 0$ we denote by $\LL_n^\Path(V) $ the $R$-module freely generated by 
the set of all $(n+1)-$tuples $(v_0, \dots, v_n)$ of elements in $V$. For each $n$, let $\DD_n^\Path(V)$ denote the submodule generated by
the {\em degenerate} $n$-tuples $(v_0,\dots,v_n)$ where $v_i=v_{i+1}$ for some $i$.  For $n\ge 1$, let
$\partial_n^\Path:\LL_n^\Path(V) \to \LL_{n-1}^\Path(V) $ be defined by
\[
(v_0,\dots,v_n) \longmapsto \sum_{i=0}^n (-1)^i (v_0,\dots,\widehat{v_i},\dots,v_n).
\]
If we set $\LL_{-1}^\Path(V) = \DD_{-1}^\Path(V) = (0)$ we can also define $\partial_{0}^\Path$ as the trivial map from 
$\LL_0^\Path(V)$ to $\LL_{-1}^\Path(V)$.
Again, we will write $\partial_n^\Path = \partial_n$ when there is no ambiguity.
 For $n\ge 0$, it is easy to verify that $\partial_{n} \partial_{n+1} = 0$ and 
$\partial_n[\DD_n^\Path(V)] \subseteq \DD_{n-1}^\Path(V)$. Hence if we define a  sequence of quotients 
\[
\CC_n^\Path(V) = \frac{\LL_n^\Path(V) }{\DD_n^\Path(V) }, \; n = -1,0,1,\dots,
\]
then $\CC^\Path (V) = (\CC^\Path_\bullet(V), \partial_\bullet^\Path)$ 
forms a chain complex.

Now let  $G=(V,E)$ be a simple  graph. Define $\LL_n^\Path(G) \subseteq \LL_n^\Path(V) $ to be the 
submodule of  $\LL_n^\Path(V) $ spanned by all $(v_0,\dots,v_n)$ such that $\{v_i, v_{i+1}\} \in E(G)$ for all $i<n$. 
Thus, $\LL_n^\Path(G) = \LL_n^\Path(V) $ when $G$ is the complete
graph on vertex set $V$.
For all $n \geq 0$, define
$\widetilde{\CC}_n^\Path(G) \subseteq \CC_n^\Path(V)$ to be the submodule of $\CC_n^\Path(V)$ generated by cosets of the form
\[
(v_0, \dots, v_n) + \DD_n^\Path (V),
\]
where $(v_0,\dots,v_n) \in\LL_n^\Path(G)$, and set $\widetilde{\CC}_{-1}^\Path(G) = (0)$.  The sequence $\{\widetilde\CC_n^\Path(G) \}_{n\ge -1}$ is not always a chain complex, since boundaries of paths $(v_0,\dots,v_n) \in \LL_n^\Path(G)$ may contain terms that are not paths in $G$. However, if we define, for $n\ge 0$,  
\[
\CC_n^\Path(G) = \partial_n^{-1}[\widetilde{\CC}_{n-1}^\Path(G)]
\]
and $\CC_{-1}^\Path(G) = (0)$, then $\partial_{n}\partial_{n+1} = 0$ immediately implies that 
$\partial_n[\CC_n^\Path(G)] \subseteq \CC_{n-1}^\Path(G)$, and 
$\CC^\Path(G)=(\CC_\bullet^\Path,\partial_\bullet)$ is a chain complex.  

\begin{Definition}
  For $n \geq 0$, denote by $\Hom^\Path_n(G)$ the $n$\textsuperscript{th} 
  homology group of the chain complex $\CC^\Path(G)$. In other words, 
  $\Hom^\Path_n(G) := \Ker\,\partial_n/\Im\,\partial_{n+1}$.
\end{Definition}

We again identify cosets in $\CC_n^\Path(G)$ with their unique representatives whose terms are all non-degenerate.
Using this notation, if $G$ is the $4$-cycle graph in \ref{fig2}, then 

\begin{gather*}
\CC_0^\Path = \big\langle (1),(2),(3),(4) \big\rangle \\
\CC_1^\Path = \big\langle (1,2), (2,1), (2,3), (3,2), (3,4), (4,3), (4,1), (1,4) \big\rangle \\
\CC_2^\Path = \big\langle (1,2,1), (2,1,2), (2,3,2), (3,2,3), (3,4,3), (4,3,4), (4,1,4), (1,4,1), \qquad\qquad \qquad\\\ \qquad\qquad\qquad(1,2,3)-(1,4,3), (2,3,4)-(2,1,4), (3,4,1)-(3,2,1), (4,1,2) - (4,3,2) \big\rangle. 
\end{gather*}

Represented in this notation, the chain groups $\CC_0^\Path(G)$ and $\CC_1^\Path(G)$ are identical to $\CC_0^\Cube(G)$ and $\CC_1^\Cube(G)$. The boundary map $\partial_1$ again has rank $5$ and its kernel is spanned by $(1,2)+(2,1), (2,3)+(3,2), (3,4)+(4,3), (4,1)+(1,4)$, and $(1,2)+(2,3)+(3,4)+(4,1)$. As before, each of these $1$-cycles is the boundary of a $2$-chain:
\begin{gather*}
(1,2)+(2,1) = \partial_2((1,2,1))\\ (2,3)+(3,2) = \partial_2((2,3,2))\\(3,4)+(4,3) = \partial_2((3,4,3))\\ (4,1)+(1,4) = \partial_2((1,4,1))\\
(1,2)+(2,3)+(3,4)+(4,1) = \partial_2(((1,2,3)-(1,4,3))+(3,4,3)+(1,4,1)). 
\end{gather*}
It follows that $\Hom_0^\Path(G)= R$ and $\Hom_1^\Path(G)=(0)$. Again it is possible to prove directly that $\Hom_2^\Path(G) = (0)$, but more general results in \ref{sec-computations} will show that,
for this example, $\Hom_n^\Path(G)=(0)$ for all $n > 0$. 

\subsection*{Classical Homology of a Graph and its Clique Complex}

We mention two other homology theories of graphs that have a substantial presence in the literature.

Given any undirected graph $G$, we may regard $G$ as a $1$-dimensional simplicial complex and compute its singular (or equivalently, simplicial) homology $\Hom^\textrm{Sing}_\bullet (G)$.  It is elementary and classical (e.g., \cite{Massey91}, Chapter 8) that if $G$ is connected, then $\Hom^\textrm{Sing}_0(G) \cong R$, $\Hom^\textrm{Sing}_1(G) \cong R^{|E(G)|-|V(G)|+1}$, and $\Hom^\textrm{Sing}_n(G)\cong(0)$ for $n>1$.  

Given $G$, we may also construct the clique complex $K_G$ of $G$ (also called the flag complex of $G$; see \cite{St}), whose faces are the subsets of $V(G)$ forming cliques, and compute the simplicial (or equivalently, singular) homology $\Hom_\bullet^{\textrm{Clique}}(G)$ of $K_G$.  If $N=\omega(G)$ is the size of the largest clique in $G$, then $\Hom_n^\textrm{Clique}(G) \cong (0)$ for $n>N$, but $\Hom_n^\textrm{Clique}(G)$ can be nonzero for any $n\le N$. 
If $G$ is a $4$-cycle as in \ref{fig2}, then $\Hom^\textrm{Sing}_n(G)$ and $\Hom^\textrm{Clique}_n(G)$ 
are isomorphic for all $n$, but this is not true in general (for example, when $G$ is a $3$-cycle). We note that a theory analogous to $\Hom^\Cube_\bullet(G)$ can be defined by considering chain groups
spanned in dimension $n$ by graph maps 
from the complete graph on $n+1$ vertices to $G$ and differential given by the alternating sum over the restrictions to complete subgraphs on $n-1$ vertices. This theory can be seen to be equivalent to $\Hom_\bullet^\textrm{Clique}(G)$ (see \cite[p. 76]{Mu}). 

\subsection*{Relationships } This paper will explore connections between the two homology theories $\Hom_\bullet^\Cube(G)$ and $\Hom_\bullet^\Path(G)$ defined above. For many classes of graphs $G$ we have, $\Hom^\Cube_\bullet(G) \cong \Hom^\Path_\bullet(G)$, and we will give several more examples of this phenomenon (see especially \ref{sec-computations}). In \ref{sec-map} we define a homomorphism from 
$\CC^\Cube(G)$ to $\CC^\Path(G)$ that may explain  some of these connections. However, $\Hom^\Cube_\bullet $ and  $\Hom^\Path_\bullet$ are not isomorphic in general, and we give an example illustrating this in \ref{sec-counter}.  

Connections with $\Hom^\Sing_\bullet$ and $ \Hom^\Clique_\bullet$ seem to be less close; for example, when $G$ is a $4$-cycle, the 
discrete cubical
and path homologies are trivial in dimension $1$, but the singular and clique homologies are nontrivial. A combination of results in \cite{BBLL} and \cite{BCW} proves that for any graph $G$, $\Hom_1^\Cube(G) \cong \Hom_1^\Sing(K)$, where $K$ is the CW-complex obtained from $G$ by ``filling in'' all of its triangles and quadrilaterals with $2$-cells.  A similar construction in higher dimensions is conjectured in \cite{BBLL} to give the correct higher homotopy groups, 
and the authors have proposed (private communication) that this might also produce a CW-complex $K^*$ such that
$\Hom_n^\Cube(G) \cong \Hom_n^\Sing(K^*)$ when
$n>1$. Note that $K^*$ has cells of arbitrary high dimension for most graphs. 

\section{Homotopy Equivalence Preserves Homology
\label{sec-homotopyhomology}}

This section describes the connection between the graph
homotopy theory introduced in \cite{BBLL} and \cite{GLMY2}
and the cubical and path homologies introduced in \cite{BCW} and \cite{GLMY2}. First we recall several basic definitions. 

\begin{Definition}\label{boxdef} (See \cite{Hammack11})
If $G$ and $H$ are graphs, the 
{\em Cartesian (or box) product} $G\,\Box\, H$ is the
graph whose vertex set is the Cartesian product
set $V(G)\times V(H)$, and whose edges are 
pairs $\{(g_1,h_1),(g_2,h_2)\}$ such that
either $g_1=g_2$ and $\{h_1,h_2\}\in E(H)$ or
$h_1=h_2$ and $\{g_1,g_2\}\in E(G)$.
\end{Definition}

\begin{Definition}\label{homotopy-def}
Suppose that $G$ and $H$ are graphs, and $f$ and $g$ are graph homomorphisms from $G$ to $H$. Then $f$ and $g$ are {\em homotopic} if there exists a graph homomorphism $\Phi$ from $G\,\Box\, I_m$ to $H$, where $I_m$ denotes the $m$-path with vertex set $\{0,1,\dots,m\}$, such
that $\Phi(\bullet,0)=f$ and $\Phi(\bullet,m)=g$.
\end{Definition}

\begin{Definition}
Two simple undirected graphs $G$ and $H$ are
{\em homotopy equivalent} if there exist graph homomorphisms $\phi: G\to H$ and $\theta: H 
\to G$ such that $\theta \phi$ is homotopic 
to $id_G$ and $\phi \theta$ is homotopic to $id_H$. Here $id_G$ and $id_H$ denote the identity maps on $G$ and $H$, respectively.
\end{Definition}

The connection between the discrete homotopy theory in \cite{BBLL} and \cite{GLMY2} and the homology theories 
introduced in \cite{BCW} and \cite{GLMY2} is expressed by the following theorem, which also provides a key  computational tool.

\begin{Theorem}\label{equiv-thm}
Let $G$ and $H$ be simple, undirected graphs. If $G$ and $H$ are homotopically equivalent, then,  for all $n\ge 0$,  
\begin{enumerate}
\item[(i)]
$\Hom^\Cube_n(G)\cong\Hom^\Cube_n(H)$, and
\item[(ii)]
 $\Hom^\Path_n(G)\cong\Hom^\Path_n(H)$.
 \end{enumerate}

\end{Theorem}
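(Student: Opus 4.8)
The plan is to follow the standard pattern for proving homotopy invariance of a homology theory: reduce the problem to a single elementary homotopy, construct an explicit chain homotopy, and verify the chain-homotopy identity $\partial s + s\partial = (\phi\theta)_\# - \mathrm{id}_\#$ (and symmetrically). Concretely, I would first observe that it suffices to prove the following: if $f,g\colon G \to H$ are homotopic graph homomorphisms, then the induced chain maps $f_\#, g_\#$ on $\CC^\Cube_\bullet$ (resp.\ $\CC^\Path_\bullet$) are chain homotopic, hence induce the same map on homology. Granting this, homotopy equivalence of $G$ and $H$ gives maps $\phi_\*\theta_\* = (\theta\phi)_\* = (\mathrm{id}_G)_\* = \mathrm{id}$ on $\Hom_n(G)$ and likewise $\theta_\*\phi_\* = \mathrm{id}$ on $\Hom_n(H)$, so $\phi_\*$ is an isomorphism in every dimension. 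This reduction is purely formal once functoriality of $G \mapsto \CC^\Cube_\bullet(G)$ and $G \mapsto \CC^\Path_\bullet(G)$ under graph homomorphisms is checked, which is routine (a graph homomorphism $\sigma\colon Q_n \to G$ composes with $f$ to give $f\circ\sigma\colon Q_n \to H$, and degenerate cubes map to degenerate cubes; similarly a path in $G$ maps to a path in $H$, and the submodule $\widetilde\CC^\Path$ is respected).

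Next, by composing elementary homotopies (one edge of $I_m$ at a time), it suffices to treat the case $m=1$, i.e.\ a graph homomorphism $\Phi\colon G\,\Box\, I_1 \to H$ with $\Phi(\bullet,0)=f$, $\Phi(\bullet,1)=g$. For the \emph{cubical} theory, I would define a prism operator $s_n\colon \CC^\Cube_n(G) \to \CC^\Cube_{n+1}(H)$ by sending a singular $n$-cube $\sigma\colon Q_n \to G$ to (a signed version of) the singular $(n+1)$-cube $Q_{n+1} = Q_n\,\Box\,Q_1 \to H$ given by $(a,t)\mapsto \Phi(\sigma(a),t)$ — this is a legitimate graph homomorphism because $Q_n\,\Box\,Q_1 \cong Q_{n+1}$ and $\Phi$ is a homomorphism. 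One then computes $\partial_{n+1}s_n + s_{n-1}\partial_n$: the two face maps $f_{n+1}^-$ and $f_{n+1}^+$ of the new last coordinate recover $f_\#\sigma$ and $g_\#\sigma$, while the remaining faces telescope against $s_{n-1}\partial_n\sigma$. Care is needed with the sign conventions in $\partial^\Cube$ (the $(-1)^i(f_i^- - f_i^+)$ form) and with the placement of the new coordinate; one checks that degenerate cubes in $\CC^\Cube$ map under $s$ into the degenerate submodule (if $\sigma$ is degenerate in direction $i$ then so is $s\sigma$), so $s$ is well-defined on the quotient. For the \emph{path} theory the analogous operator sends a path $(v_0,\dots,v_n)$ in $G$ to an alternating sum $\sum_i (-1)^i(f v_0,\dots, f v_i, g v_i, \dots, g v_n)$ of $(n+1)$-tuples in $H$ (the standard prism formula); each summand is a genuine path in $H$ because $\{f v_i, g v_i\}\in E(H)$ or $fv_i = gv_i$ (as $\Phi$ is a homomorphism on the edge $(v_i,0)$–$(v_i,1)$) and $f,g$ are homomorphisms. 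One must additionally check that $s$ carries $\CC^\Path_n(G) = \partial^{-1}[\widetilde\CC^\Path_{n-1}(G)]$ into $\CC^\Path_{n+1}(H)$, which follows from the chain-homotopy identity itself together with the already-established fact that $f_\#,g_\#$ preserve these submodules.

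The main obstacle, and the place where the two cases genuinely diverge, is the bookkeeping in the path case: unlike the cubical prism, which is a single honest singular cube, the path prism is a sum of $(n+1)$-tuples and one has to verify \emph{both} that every term actually lies in $\LL^\Path(H)$ (allowing for the degeneracies $f v_i = g v_i$, which make some terms degenerate and hence zero in $\CC^\Path$ — consistent with, but requiring attention in, the computation) \emph{and} that the prism output lands in the subcomplex $\CC^\Path_\bullet(H)$ rather than merely in $\CC^\Path_\bullet(V(H))$. I expect to handle the latter by noting $\partial s\sigma = (g_\# - f_\#)\sigma - s\partial\sigma$; the right-hand side lies in $\widetilde\CC^\Path(H)$ because $f_\#,g_\#$ and $s$ (applied to the already-constrained $\partial\sigma$) produce paths in $H$, so $s\sigma \in \partial^{-1}[\widetilde\CC^\Path(H)] = \CC^\Path(H)$. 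The cubical computation, while sign-heavy, is conceptually the more straightforward of the two. Both verifications follow lines that essentially appear in \cite{BCW} and \cite{GLMY2}, so I would present the cubical prism in full and indicate the path case as a parallel (standard simplicial-prism) argument, emphasizing only the two subcomplex-membership points above.
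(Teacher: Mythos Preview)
Your proposal is correct and follows essentially the same chain-homotopy (prism) argument as the paper: both construct, for the cubical case, the $(n+1)$-cube $(a,t)\mapsto\Phi(\sigma(a),t)$ and verify the identity $\partial s + s\partial = g_\# - f_\#$, and for the path case use the alternating-sum prism $\sum_k(-1)^k(fv_0,\dots,fv_k,gv_k,\dots,gv_n)$. The only cosmetic differences are that the paper inserts the new coordinate in the \emph{first} slot rather than the last and sums over the $m$ steps of the homotopy directly rather than first reducing to $m=1$; your bootstrap argument for showing $s\sigma\in\CC^\Path_{n+1}(H)$ via the identity $\partial s\sigma=(g_\#-f_\#)\sigma-s\partial\sigma$ is in fact more self-contained than the paper, which defers that point to \cite[Proposition~2.12]{GLMY2}.
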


For both parts of \Ref{equiv-thm} it suffices to prove that  if $\alpha$ and $\beta$ are homotopically equivalent maps from $G$ to $H$, then $\alpha$ and $\beta$ induce identical maps on homology. For cubical homology, this result is contained in Theorem 3.8(1) of \cite{BCW}, where it is proved for any discrete metric space. Since details of that argument do not appear in \cite{BCW} we provide them here (in the case of graphs) for completeness. For path homology, \Ref{equiv-thm}(ii) is stated and proved explicitly in \cite{GLMY2} (Theorem 3.3(ii)), for directed graphs. We include a sketch of that argument in the undirected case. Both proofs employ a chain homotopy argument, and are
structurally similar.

\begin{proof}[Proof of \Ref{equiv-thm}(i)]
Suppose that $\alpha$ and $\beta$ are homomorphisms from $G$ to $H$, and  $\Phi$ is a homotopy from $\alpha$ to $\beta$ with $\Phi(x,0)=\alpha(x)$ and $\Phi(x,m) = \beta(x)$ for all $x\in V(G)$, as in \ref{homotopy-def}. 
If $\sigma\in \CC^\Cube_n(G)$, let $\Phi(\sigma,j)$ denote the map defined by
$
\Phi(\sigma,j)(q) = \Phi(\sigma(q),j)
$
for all $q\in Q_n$, and define
$\tilde{\alpha}_n, \tilde{\beta}_n: \CC_n^\Cube(G)\to \CC_n^\Cube(H)$ by
\[
\tilde{\alpha}_n(\sigma) = \Phi(\sigma,0), \quad
\tilde{\beta}_n(\sigma) = \Phi(\sigma,m).
\]
 It is straightforward to show that $\tilde{\alpha}$ and $\tilde{\beta}$ are
chain maps, i.e., $\tilde{\alpha}_{n-1}  \partial_n = \partial_n \tilde{\alpha}_n$ and similarly
for $\tilde{\beta}$.
We will construct a sequence of maps $h_n: \CC_n^\Cube(G)\to \CC_{n+1}^\Cube(H)$ such that
\begin{equation}\label{chomotopy1}
\tilde{\beta}_n - \tilde{\alpha}_n = \partial_{n+1} h_n + h_{n-1} \partial_n,
\end{equation}
for all $n$.
In other words, the sequence $\{h_n\}$ defines a chain homotopy between $\{\tilde{\alpha}_n\}$ and
$\{\tilde{\beta}_n\}$. It follows that if $z\in \CC_n^\Cube(G)$ is a cycle, then 
\[
\tilde{\beta}_n(z) - \tilde{\alpha}_n(z) = \partial_{n+1} h_n (z).
\]
In particular,  $\tilde{\beta}_n(z) - \tilde{\alpha}_n(z) \in \Im\,\partial_{n+1}$ and hence $\tilde{\alpha}_n(z)$ 
and $\tilde{\beta}_n(z)$ lie in the same homology class for all $z$, implying that
$\alpha$ and $\beta$ induce the same maps on homology.

Given a singular $n$-cube $\sigma \in \CC_n^\Cube(G)$, the map $h_n(\sigma) \in \CC_{n+1}^\Cube(H)$ is constructed as follows. For $j=1,\dots,m$, let $h_n^{(j)}(\sigma)\in\CC_{n+1}^\Cube(H)$ be the unique labeled $(n+1)$-cube such that
\begin{gather}
    f^+_1 h_n^{(j)}(\sigma) (q) = \Phi(\sigma(q),j)\nonumber \\
     f^-_1 h_n^{(j)}(\sigma) (q) = \Phi(\sigma(q),j-1) \nonumber,
\end{gather}
for all $q\in Q_n$. Finally, define
\begin{equation}
h_n(\sigma) = h_n^{(1)}(\sigma) + \cdots + h_n^{(m)}(\sigma).\nonumber
\end{equation}
It is immediate from the definition of $h_n^{(j)}$ that
\begin{gather}
    f^+_1 h_n^{(m)}(\sigma) (q) =\beta(\sigma(q)) \nonumber\\
     f^-_1 h_n^{(1)}(\sigma) (q) = \alpha(\sigma(q)), \nonumber
\end{gather}
for all $q\in Q_n$. A few moments of reflection show that
for $i=2,\dots, n$, we have
\begin{equation}\label{equiv-identity2}
f^\epsilon_{i} (h_n^{(j)}(\sigma))=h_{n-1}^{(j)}(f_{i-1}^\epsilon \sigma) 
\end{equation}
for  $j \in [m]$ and $\epsilon \in \{-,+\}$. Computing the right hand side of \Ref{chomotopy1}, we
get
\begin{eqnarray}\nonumber
h_{n-1}^{(j)}(\partial_n(\sigma) )&=& h_{n-1}^{(j)}\bigg(\sum_{i=1}^n (-1)^{i} (f^-_i \sigma - f^+_i \sigma)\bigg)\\
&=& \sum_{i=1}^n (-1)^{i} \big(h_{n-1}^{(j)}(f^-_i \sigma) - h_{n-1}^{(j)}(f^+_i \sigma)\big)\label{equiv-RHSsecond2}
\end{eqnarray}
and
\begin{eqnarray}\label{equiv-RHSfirst}
\partial_{n+1} (h_n^{(j)}(\sigma)) &=&\sum_{i=1}^{n+1} (-1)^{i}\big( f^-_i (h_n^{(j)}(\sigma)) - f^+_i (h_n^{(j)}(\sigma)) \big).
\end{eqnarray}
It follows from \Ref{equiv-identity2} that terms $i=1,\dots,n$ in \Ref{equiv-RHSsecond2} are identical to terms $i=2,\dots, n+1$ in
\Ref{equiv-RHSfirst}, but have opposite signs. Hence they cancel, leaving only the first term
in \Ref{equiv-RHSfirst},
and we obtain 
\begin{eqnarray}
\partial_{n+1}h_n^{(j)}(\sigma) + h_{n-1}^{(j)}\partial_n(\sigma) & =&
f_1^+(h_n^{(j)}(\sigma)) - f_1^-(h_n^{(j)}(\sigma))  \nonumber\\
\label{finalidentity}
&=& \Phi(\sigma,j) - \Phi(\sigma,j-1).
\end{eqnarray}
Summing \ref{finalidentity} over $j$ gives 
\begin{eqnarray}
\partial_{n+1}h_n(\sigma) + h_{n-1}\partial_n(\sigma) & =&
  \sum_{j=1}^m \Phi(\sigma,j) - \Phi(\sigma,j-1) \nonumber \\
   &=& \Phi(\sigma,m) - \Phi(\sigma,0) \nonumber\\
   &=&
   \tilde{\beta}(\sigma) - \tilde{\alpha}(\sigma) \label{finalstep}
\end{eqnarray}
as desired, and \Ref{chomotopy1} is proved.

\end{proof}

\begin{proof}[Proof of \ref{equiv-thm}(ii)] 
The proof in \cite{GLMY2} has essentially the same structure as the proof of 
part (i) given above. We will sketch the argument, using similar notation but focusing on the important differences. Again assume that $\alpha, \beta: G\to H$ are graph homomorphisms, with a homotopy $\Phi$ such that $\Phi(x,0) = \alpha(x)$ and $\Phi(x,m) = \beta(x)$ for all $x\in V(G)$. 
It is shown in \cite[Theorem 2.10]{GLMY2}  that $\alpha$ and $\beta$ induce chain maps $\tilde{\alpha}_n$ and $\tilde{\beta}_n$ from 
$\CC_n^\Path(G)$ to $\CC_n^\Path(H)$. As before, the key step in the present proof is to construct a chain homotopy between the sequences $\{\tilde{\alpha}_n\}$ and $\{\tilde{\beta}_n\}$. 


For $\sigma = (v_0,v_1,\dots,v_n) \in \CC_n^\Path(G)$ and $j \in [m]$, define $h_n^{(j)}(\sigma)
\in \CC^\Path_{n+1}(H)$ as follows:
\[
h_n^{(j)}(\sigma) = 
\sum_{k=0}^n (-1)^k (\Phi(v_0,j-1),\dots,
\Phi(v_k,j-1),\Phi(v_k,j),\dots,\Phi(v_n,j)),
\]
and define
\[
h_n(\sigma) = h_n^{(1)}(\sigma) + \cdots + h_n^{(m)}(\sigma).
\]

\smallskip
\noindent
At this point it is essential to check that $h_n^{(j)}(\sigma) \in \CC^\Path_{n+1}(H)$ for all $j$, since not every linear combination of elements of $\tilde{\CC}^\Path_{n+1}(H)$ 
is an element of
$\CC^\Path_{n+1}(H)$.
An  argument proving this fact can be found in \cite[Proposition 2.12]{GLMY2}, and is omitted here. 

The proof is completed by showing that identity \Ref{finalidentity} holds for the maps $h_n$ just defined, exactly as it did in part (i). This argument is technical but straightforward, and is omitted here. With 
\Ref{finalidentity} in hand,  \Ref{finalstep} follows, and we are done.
\end{proof}

\section{Computations of Homology Groups}\label{sec-computations}

With \ref{equiv-thm} in hand, we have tools that will allow us to compute 
$\Hom_\bullet^\Cube(G)$ and $\Hom_\bullet^\Path(G) $ for large classes of graphs. We give
many examples in this section. Most involve {\em deformation retraction}, a special kind of homotopy equivalence that is frequently easy to recognize. 

\begin{Definition}\label{retractx-def}
Let $G$ be a graph, and let $H$ be an induced subgraph of $G$. That is, $V(H) \subseteq V(G)$ and $E(H)$ consists of all edges in $E(G)$ for which both endpoints belong to $V(H).$ 
\begin{enumerate}
    \item
A {\em retraction of $G$ onto $H$} is a graph homomorphism $r:G\to H$ such that
 $r(y) = y$ for all $y\in V(H)$.
\item A {\em deformation retraction of $G$ onto $H$} is a retraction $r:G \to H$ such that $ir$ is homotopic to $id_G$, where $i$ denotes the inclusion map from $H$ to $G$.
\item
A {\em one-step deformation retraction from $G$ to $H$}
is a deformation retraction $r$ for which $m=1$ in the homotopy (\Ref{homotopy-def}) between
$ri$ and $id_G.$ Equivalently, $r$ is a retraction such that $\{x,r(x)\}$ is an edge for all $x\in V(G)$.
\end{enumerate}
\end{Definition}

If $r$ is a deformation retraction from $G$ to $H$, then, since $ri= id_H$, the following lemma is an immediate consequence of $\Ref{equiv-thm}$.

\begin{Lemma}
If $r$ is a deformation retraction from $G$ onto a subgraph $H$, and $i$ denotes the inclusion map from $H$ to $G$, then $r$ and $i$ define a homotopy equivalence between $G$ and $H$. Consequently, 
$\Hom^\Cube_n(G) \cong \Hom^\Cube_n(H)$ and $\Hom^\Path_n(G) \cong \Hom^\Path_n(H)$,  for all $n \geq 0$.
\end{Lemma}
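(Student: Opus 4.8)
The plan is to check directly that the pair $(r,i)$ satisfies the definition of a homotopy equivalence, and then to quote \Ref{equiv-thm}. Set $\phi := r\colon G\to H$ and $\theta := i\colon H\to G$. The composite $\theta\phi = i\circ r\colon G\to G$ is homotopic to $\mathrm{id}_G$ by the very definition of a deformation retraction (\ref{retractx-def}(2)), so the first of the two conditions in the definition of homotopy equivalence holds for free. For the second condition, observe that since $r$ is a retraction of $G$ onto the induced subgraph $H$, we have $r(y)=y$ for every $y\in V(H)$; hence $\phi\theta = r\circ i = \mathrm{id}_H$. It remains only to note that $\mathrm{id}_H$ is homotopic to itself, which is clear: any graph homomorphism $f$ is homotopic to itself via the homotopy $\Phi\colon H\,\Box\, I_1\to H$, $\Phi(y,t)=f(y)$, whose restrictions to $t=0$ and $t=1$ both equal $f$. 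Thus both conditions of the definition are met, and $r$ and $i$ give a homotopy equivalence between $G$ and $H$.

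Once the homotopy equivalence is in place, the homological conclusions are immediate: \Ref{equiv-thm}(i) gives $\Hom^\Cube_n(G)\cong\Hom^\Cube_n(H)$ and \Ref{equiv-thm}(ii) gives $\Hom^\Path_n(G)\cong\Hom^\Path_n(H)$, for all $n\ge 0$. This completes the argument.

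I expect no real obstacle here; the entire content is the unwinding of \ref{retractx-def} and \ref{homotopy-def} together with the two composition identities $i\circ r\simeq \mathrm{id}_G$ and $r\circ i = \mathrm{id}_H$. The only point worth stating explicitly — and it is barely a subtlety — is that $\mathrm{id}_H$ qualifies as homotopic to itself in the sense of \ref{homotopy-def}, handled by the length-one homotopy that is constant in the $I_1$-direction. All heavy lifting has already been done in \Ref{equiv-thm}.
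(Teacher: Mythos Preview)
Your proof is correct and follows essentially the same approach as the paper, which simply observes that $ri = \mathrm{id}_H$ and declares the lemma an immediate consequence of \Ref{equiv-thm}. You supply slightly more detail (verifying explicitly that $\mathrm{id}_H$ is homotopic to itself), but the argument is the same.
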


\smallskip

This result immediately gives  several infinite classes of graphs for which  the cubical and path (reduced) homology is trivial in all dimensions.

\begin{Corollary}\label{cor-examples}
If $G$ is a tree, or a complete graph, or a hypercube, then $\Hom_n^\Cube(G)\cong\Hom_n^\Path(G)\cong(0) $ for all $n>0$.
\end{Corollary}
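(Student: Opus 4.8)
The plan is to reduce each of the three cases to the contractibility of a point via Lemma~\ref{deformation} (the deformation retraction lemma) and \ref{equiv-thm}. Since both $\Hom_n^\Cube$ and $\Hom_n^\Path$ of the one-vertex graph $Q_0$ are trivially $(0)$ in every positive dimension (all chain groups above dimension $0$ have only degenerate generators, hence vanish), it suffices to show that each of the three families deformation retracts, in finitely many steps, onto a single vertex. Because a composition of deformation retractions is again a homotopy equivalence, I will exhibit in each case a sequence of one-step deformation retractions shrinking the graph down to a point.

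\textbf{Trees.} If $T$ is a tree with at least one edge, pick a leaf $v$ with unique neighbor $w$, and define $r\colon T\to T\setminus v$ by $r(v)=w$ and $r(x)=x$ otherwise. This is a graph homomorphism onto the induced subgraph $T\setminus v$, it fixes $T\setminus v$ pointwise, and $\{x,r(x)\}$ is an edge for every $x$ (trivially for $x\neq v$, and $\{v,w\}\in E(T)$), so by \ref{retractx-def}(3) it is a one-step deformation retraction. Iterating removes leaves one at a time until a single vertex remains; finiteness of $T$ guarantees termination.

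\textbf{Complete graphs.} If $G=K_n$ with $n\ge 2$, pick any two vertices $u,v$ and define $r\colon K_n\to K_n\setminus v$ by $r(v)=u$, $r(x)=x$ otherwise. Since $K_n\setminus v\cong K_{n-1}$ and every pair of distinct vertices is adjacent, $r$ is a homomorphism, fixes the subgraph pointwise, and $\{v,r(v)\}=\{v,u\}\in E(K_n)$, so it is a one-step deformation retraction onto $K_{n-1}$. Iterating down to $K_1$ finishes this case.

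\textbf{Hypercubes.} For $Q_n$ with $n\ge 1$, write vertices as $(a_1,\dots,a_n)\in\{0,1\}^n$ and retract onto the face $H=\{a_n=0\}\cong Q_{n-1}$ by $r(a_1,\dots,a_n)=(a_1,\dots,a_{n-1},0)$. This is a graph homomorphism (it either fixes an edge or collapses an edge in the $n$-th direction to a vertex), fixes $H$ pointwise, and $\{(a_1,\dots,a_n),r(a_1,\dots,a_n)\}$ is an edge whenever $a_n=1$ and is a non-issue when $a_n=0$; hence $r$ is a one-step deformation retraction of $Q_n$ onto $Q_{n-1}$. Iterating gives $Q_n\simeq Q_0$, a point.

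In all three cases \ref{equiv-thm} then yields $\Hom_n^\Cube(G)\cong\Hom_n^\Cube(Q_0)\cong(0)$ and $\Hom_n^\Path(G)\cong\Hom_n^\Path(Q_0)\cong(0)$ for all $n>0$, as claimed. There is essentially no hard part here: the only things to be careful about are verifying the graph-homomorphism condition for each retraction (routine, since in each case a non-edge can only arise from collapsing an edge, which is permitted) and observing that the one-vertex graph really does have vanishing positive-dimensional homology in both theories. I would present the three retractions explicitly and invoke the lemma; the argument is short and the obstacle, if any, is purely bookkeeping.
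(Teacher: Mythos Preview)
Your proposal is correct and follows essentially the same approach as the paper: in each of the three cases you exhibit the same one-step deformation retractions (leaf removal, vertex collapse in $K_n$, facet collapse in $Q_n$) and iterate down to a point, invoking \ref{equiv-thm}. The only cosmetic discrepancy is your reference to ``Lemma~\ref{deformation}'', which is not a label used in the paper; the relevant result is the unnumbered lemma immediately following \ref{retractx-def}.
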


\begin{proof}If $G$ is a tree and $x\in V(G)$ is a leaf connected to a unique vertex $y$, then the map $r: V(G)\to V(G)\backslash\{x\}$ defined by 
\begin{equation}\label{retract-def}
r(v) = 
\begin{cases}
v & v \ne x, \\
y & v = x,
\end{cases}
\end{equation}
is a one-step deformation retraction from $G$ onto the subgraph $G\backslash x$.  If $G$ is a complete graph, $x\in V(G)$ and $y\ne x$ is any other vertex, then \ref{retract-def} again defines a one-step deformation retraction from $G$ to $G\backslash x$.  If $G$ is a hypercube of dimension $n$, then the map $r$ defined by collapsing any facet onto its opposite facet is a one-step deformation retraction onto a hypercube of dimension $n-1$.  In all three cases, the process can be
repeated, eventually showing that the homology (both cubical and path) is the same as that of a graph with a single vertex.
\end{proof}

These arguments can be extended to a larger class of examples:

\begin{Theorem} Let $G$ be a graph, and $K_1$ and $K_2$ are induced nonempty subgraphs of $G$
such that $V(G) = V(K_1)\cup V(K_2)$ and $V(K_1)\cap V(K_2) = \emptyset$. Suppose there exist vertices $a\in V(K_1)$
and $b\in V(K_2)$ such that $\{a,b\}\in E(G)$, every vertex in $K_1$ is connected to $b$, and every vertex in $K_2$ is connected to $a$. Then $\Hom_n^\Path(G)
\cong \Hom_n^\Cube(G) \cong (0)$ for $n> 0$.
\end{Theorem}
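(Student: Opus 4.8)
The plan is to produce a one-step deformation retraction of $G$ onto the subgraph $H$ with vertex set $\{a,b\}$ and single edge $\{a,b\}$, and then quote homotopy invariance. Since $H$ is a tree, \ref{cor-examples} gives $\Hom_n^\Cube(H)\cong\Hom_n^\Path(H)\cong(0)$ for $n>0$, and a one-step deformation retraction is in particular a homotopy equivalence, so \ref{equiv-thm} transfers this vanishing to $G$.

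I would take the retraction $r\colon V(G)\to V(G)$ defined by $r(x)=b$ for $x\in V(K_1)\setminus\{a\}$, $r(x)=a$ for $x\in V(K_2)\setminus\{b\}$, and $r(x)=x$ for $x\in\{a,b\}$. This is well defined because $V(G)=V(K_1)\cup V(K_2)$ with $V(K_1)\cap V(K_2)=\emptyset$ (which also forces $a\neq b$), and it fixes $H$ pointwise, so it is a retraction onto $H$. Two verifications remain. First, $r$ is a graph homomorphism $G\to H$: every edge of $G$ lies inside $K_1$, inside $K_2$, or runs between the two parts, and in each case one checks its image is a single vertex or the edge $\{a,b\}$; the only case needing a moment's thought is an edge $\{v,w\}$ with $v\in V(K_1)\setminus\{a\}$ and $w\in V(K_2)\setminus\{b\}$, whose image is $\{b,a\}$, an edge because $\{a,b\}\in E(G)$. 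Second, $\{x,r(x)\}\in E(G)$ or $x=r(x)$ for every $x\in V(G)$: this is immediate from the hypotheses that every vertex of $K_1$ is adjacent to $b$ and every vertex of $K_2$ is adjacent to $a$. These two facts say exactly that $\Phi\colon G\,\Box\,I_1\to G$ with $\Phi(x,0)=x$ and $\Phi(x,1)=r(x)$ is a graph homomorphism, so $r$ is a one-step deformation retraction, and the theorem follows. (We do not even need $K_1$, $K_2$ to be induced for this argument.)

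There is no genuine difficulty in this proof once the right map is chosen, and choosing it is the only subtle step. The obvious candidates --- collapsing all of $K_1$ onto $a$, or all of $K_2$ onto $b$, or retracting $G$ onto $\{a,b\}$ through an intermediate subgraph --- all fail in the same way: an edge joining a vertex of $V(K_1)\setminus\{a\}$ to a vertex of $V(K_2)\setminus\{b\}$ is sent to a pair of vertices that need not be adjacent, since the hypotheses do not say that a vertex of $K_1$ is adjacent to $a$, nor that a vertex of $K_2$ is adjacent to $b$. Moving the two parts in opposite directions simultaneously, $V(K_1)\setminus\{a\}$ toward $b$ and $V(K_2)\setminus\{b\}$ toward $a$, is precisely what forces every such edge onto $\{a,b\}$ and makes all the checks routine.
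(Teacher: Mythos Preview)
Your proof is correct and matches the paper's approach exactly: the same retraction $r$ sending $V(K_1)\setminus\{a\}$ to $b$ and $V(K_2)\setminus\{b\}$ to $a$, verified to be a one-step deformation retraction onto the edge $\{a,b\}$. You supply more detail in the verifications than the paper does (it simply asserts ``an easy argument shows that $r$ is a one-step deformation''), and your closing paragraph explaining why the more obvious collapses fail is a nice addition not present in the original.
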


\begin{proof}
Let $H$ be the subgraph of $G$ with vertices $a$ and $b$ and the single edge $\{a,b\}$. Define $r: V(G)\to V(H)$ by
\[
r(x) = \begin{cases}
a & \textrm{if $x \in K_2-\{b\}$},\\
b & \textrm{if $x \in K_1-\{a\}$},\\
x & \textrm{if $x \in H$.}
\end{cases}
\]
An easy argument shows that $r$ is a one-step deformation of $G$ onto $H$, and since $H$ has trivial reduced homology in both the cubical and path case, the result follows from \ref{equiv-thm}.
\end{proof}

\begin{Corollary} For all $s,t >0$, let $K_{s,t}$ denote a complete bipartite graph with $s+t$ vertices. Then $\Hom_n^\Path(K_{s,t}) \cong \Hom_n^\Cube(K_{s,t}) \cong (0)$ for $n>0$. \hfill \qed
\end{Corollary}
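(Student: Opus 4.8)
The plan is to deduce this directly from the preceding Theorem by exhibiting the appropriate vertex partition of $K_{s,t}$. Write $K_{s,t}$ with parts $A$ and $B$, where $|A| = s$ and $|B| = t$, so that every vertex of $A$ is adjacent to every vertex of $B$ and there are no edges within $A$ or within $B$.

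First I would pick any vertex $a \in A$ and any vertex $b \in B$. Set $K_1$ to be the induced subgraph on $A$ (so $a \in V(K_1)$) and $K_2$ to be the induced subgraph on $B$ (so $b \in V(K_2)$). These are nonempty since $s,t > 0$, they are induced subgraphs by construction, their vertex sets partition $V(K_{s,t})$, and $\{a,b\} \in E(K_{s,t})$ because $a$ and $b$ lie in opposite parts. It remains to check the two connectivity hypotheses: every vertex of $K_1$ is adjacent to $b$, and every vertex of $K_2$ is adjacent to $a$. But every vertex of $K_1$ lies in $A$ and $b \in B$, so adjacency holds by the definition of the complete bipartite graph; symmetrically for $K_2$ and $a$. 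Thus all hypotheses of the Theorem are satisfied, and we conclude $\Hom_n^\Path(K_{s,t}) \cong \Hom_n^\Cube(K_{s,t}) \cong (0)$ for $n > 0$.

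There is essentially no obstacle here; the statement is a one-line specialization, which is presumably why the excerpt marks it with \hfill \qed. If one wanted to be fully self-contained and not invoke the Theorem, one could instead directly verify that the map $r \colon V(K_{s,t}) \to \{a,b\}$ sending $A \setminus \{a\}$ to $b$, sending $B \setminus \{b\}$ to $a$, and fixing $a$ and $b$ is a one-step deformation retraction onto the single edge $\{a,b\}$ — for this one checks that $r$ is a graph homomorphism (any edge has one endpoint in $A$ and one in $B$, so its image is either the edge $\{a,b\}$ or a single vertex) and that $\{x, r(x)\}$ is an edge for every $x$ (true because $r(x)$ always lies in the part opposite to $x$) — and then appeal to Lemma (on deformation retractions) together with the triviality of the reduced homology of a single edge. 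But invoking the Theorem is cleaner, so that is the route I would take.
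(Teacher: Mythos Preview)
Your proposal is correct and matches the paper's intended approach exactly: the corollary is marked with \qed\ because it is an immediate specialization of the preceding theorem, and your choice of $K_1$, $K_2$, $a$, and $b$ is precisely the verification needed. Your optional self-contained argument via the one-step deformation retraction onto the edge $\{a,b\}$ is in fact just the proof of that theorem specialized to this case, so nothing is genuinely different there either.
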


\begin{Corollary} \label{cor-join}Let $K_1$ and $K_2$ be graphs with disjoint vertex sets. Consider the join graph $G = K_1 \ast K_2,$  where
$V(G) = V(K_1) \cup V(K_2)$ and $E(G)$ consists of $E(K_1)$ and $E(K_2)$ together
with all edges $\{p,q\}$ connecting a vertex $p\in V(K_1)$ with a vertex  $q \in V(K_2)$. Then $\Hom_n^\Path(K_1\ast K_2) \cong \Hom_n^\Cube(K_1\ast K_2) \cong (0)$ for $n>0$. \hfill \qed
\end{Corollary}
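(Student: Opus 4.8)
The statement to prove is Corollary~\ref{cor-join}: for graphs $K_1$ and $K_2$ with disjoint vertex sets, the join $G = K_1 \ast K_2$ satisfies $\Hom_n^\Path(G) \cong \Hom_n^\Cube(G) \cong (0)$ for all $n > 0$.

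The plan is to deduce this directly from the Theorem immediately preceding it (the one about $K_1$, $K_2$ with $V(G) = V(K_1)\cup V(K_2)$, disjoint, and the existence of $a \in V(K_1)$, $b\in V(K_2)$ with $\{a,b\}\in E(G)$ such that every vertex of $K_1$ is adjacent to $b$ and every vertex of $K_2$ is adjacent to $a$). First I would dispose of degenerate cases: if either $K_1$ or $K_2$ is empty then $G$ equals the other factor, and the statement is not asserted in that case (the corollary concerns the genuine join), so I may assume both $V(K_1)$ and $V(K_2)$ are nonempty; then both factors, as induced subgraphs of $G$, are nonempty. Next, pick any vertex $a \in V(K_1)$ and any vertex $b \in V(K_2)$. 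By the definition of the join, $E(G)$ contains all edges between $V(K_1)$ and $V(K_2)$, so in particular $\{a,b\}\in E(G)$, every vertex of $V(K_1)$ is connected to $b$, and every vertex of $V(K_2)$ is connected to $a$. The induced subgraphs of $G$ on $V(K_1)$ and on $V(K_2)$ are precisely $K_1$ and $K_2$ respectively (the join adds no edges within either vertex class). Thus all hypotheses of the preceding Theorem are met verbatim, and the conclusion $\Hom_n^\Path(G) \cong \Hom_n^\Cube(G)\cong(0)$ for $n>0$ follows at once.

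There is essentially no obstacle here; the only point requiring a line of care is checking that the induced subgraph structure matches, i.e.\ that $K_1$ and $K_2$ as abstract graphs coincide with the subgraphs of $G$ induced on $V(K_1)$ and $V(K_2)$, which is immediate from the definition of $E(K_1\ast K_2)$ given in the statement. I would write the proof in one or two sentences: ``Apply the preceding Theorem with $a$ any vertex of $K_1$ and $b$ any vertex of $K_2$; since $G$ contains every edge between $V(K_1)$ and $V(K_2)$, all hypotheses hold.'' This is exactly the style the authors use for the two corollaries just above it, so the one-line proof ending in \hfill\qed is appropriate.
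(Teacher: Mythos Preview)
Your proposal is correct and matches the paper's own treatment: the corollary is stated immediately after the Theorem with a \qed and no explicit proof, precisely because choosing any $a\in V(K_1)$ and $b\in V(K_2)$ trivially verifies the Theorem's hypotheses in the join. Your one-line justification is exactly what the authors intend.
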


The last corollary includes two elementary but important examples, the {\em cone} $G*\{p\}$ of $G$ over $p$, and the {\em suspension} $G*\{p,q\}$ of $G$ over a pair of non-adjacent vertices $p$ and $q.$ \Ref{cor-join} shows that the reduced cubical and path homologies in both cases are trivial.

\begin{Definition}
The {\em disjoint sum} $K_1\oplus K_2$ of graphs $K_1$ and $K_2$ is the graph with vertex set $V(K_1\oplus K_2) = V(K_1)\cup V(K_2)$ and edge set $E(K_1\oplus K_2) = E(K_1)\cup E(K_2)$.
\end{Definition}

\begin{Theorem}\label{disjointsum}
For any graphs $K_1$ and $K_2$, 
$\Hom_n^\Cube(K_1\oplus K_2) \cong 
\Hom_n^\Cube(K_1)\oplus \Hom_n^\Cube(K_2)$ and 
$\Hom_n^\Path(K_1\oplus K_2) \cong 
\Hom_n^\Path(K_1)\oplus \Hom_n^\Path(K_2)$ for all $n\ge 0$.
\end{Theorem}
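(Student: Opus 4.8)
The plan is to establish both isomorphisms by exhibiting an explicit chain-level decomposition. Since $K_1$ and $K_2$ have disjoint vertex sets and there are no edges of $K_1 \oplus K_2$ between $V(K_1)$ and $V(K_2)$, every connected subgraph of $K_1 \oplus K_2$ — in particular the image of any graph homomorphism from a connected graph — lies entirely inside $K_1$ or entirely inside $K_2$. I would first record this observation, since it is the structural fact that drives everything.

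For the cubical case, note that $Q_n$ is connected for every $n \ge 0$, so any singular $n$-cube $\sigma : Q_n \to K_1 \oplus K_2$ factors through either $K_1$ or $K_2$. Hence $\LL_n^\Cube(K_1 \oplus K_2) = \LL_n^\Cube(K_1) \oplus \LL_n^\Cube(K_2)$ as $R$-modules, a cube is degenerate in the sum iff it is degenerate in the relevant factor, and therefore $\CC_n^\Cube(K_1 \oplus K_2) \cong \CC_n^\Cube(K_1) \oplus \CC_n^\Cube(K_2)$. The face maps $f_i^\pm$, and hence $\partial_n^\Cube$, respect this decomposition because the image of a face of $\sigma$ is contained in the image of $\sigma$. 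So $\CC^\Cube(K_1 \oplus K_2)$ is the direct sum of the chain complexes $\CC^\Cube(K_1)$ and $\CC^\Cube(K_2)$, and since homology commutes with finite direct sums of chain complexes the first isomorphism follows.

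For the path case the same idea applies but requires slightly more care because of the definition $\CC_n^\Path(G) = \partial_n^{-1}[\widetilde\CC_{n-1}^\Path(G)]$. A tuple $(v_0,\dots,v_n)$ spanning $\LL_n^\Path(K_1 \oplus K_2)$ has consecutive vertices adjacent, so again all $v_i$ lie in one factor; thus $\LL_n^\Path(K_1 \oplus K_2) = \LL_n^\Path(K_1) \oplus \LL_n^\Path(K_2)$, and likewise $\widetilde\CC_n^\Path(K_1 \oplus K_2) = \widetilde\CC_n^\Path(K_1) \oplus \widetilde\CC_n^\Path(K_2)$ inside $\CC_n^\Path(V(K_1) \sqcup V(K_2))$. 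The boundary map $\partial_n^\Path$ preserves the splitting since deleting a vertex from a tuple confined to one factor keeps it there. It then follows that $\partial_n^{-1}[\widetilde\CC_{n-1}^\Path(K_1 \oplus K_2)] = \partial_n^{-1}[\widetilde\CC_{n-1}^\Path(K_1)] \oplus \partial_n^{-1}[\widetilde\CC_{n-1}^\Path(K_2)]$, i.e. $\CC^\Path(K_1 \oplus K_2) = \CC^\Path(K_1) \oplus \CC^\Path(K_2)$ as chain complexes, and taking homology gives the second isomorphism.

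The only real subtlety — and the step I would be most careful about — is the path-homology claim that $\partial_n^{-1}$ of a direct sum is the direct sum of the preimages: this uses that the preimage is computed inside $\CC_n^\Path$ of the \emph{disjoint} vertex set $V(K_1) \sqcup V(K_2)$, and that $\partial_n^\Path$ on $\CC_n^\Path(V(K_1)\sqcup V(K_2))$ itself already respects the factor decomposition (a mixed tuple with $v_i$ from both factors simply never arises in $\LL_n^\Path(K_1\oplus K_2)$, but one must check the argument is not disturbed by degenerate cosets). Everything else is bookkeeping with direct sums, so I would state the connectedness lemma cleanly and then let the two cases run in parallel.
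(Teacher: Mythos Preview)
Your proposal is correct and is precisely the elementary argument the paper has in mind; the paper's own proof consists of the single sentence ``The proof is elementary in both cases,'' so you have simply spelled out what the authors omitted. The one point of phrasing to tighten is that in the path case the preimage $\partial_n^{-1}[\widetilde\CC_{n-1}^\Path(G)]$ should be understood as taken inside $\widetilde\CC_n^\Path(G)$ (not inside all of $\CC_n^\Path(V)$), and since $\widetilde\CC_n^\Path(K_1\oplus K_2)$ already splits and $\partial_n$ respects that splitting, the intersection with the preimage splits as well --- which is exactly what you argue, just stated more directly.
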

\begin{proof}
The proof is elementary in both cases.
\end{proof}

\begin{Definition}\label{def-chordal}
A graph $G$ is {\em chordal} if every cycle of length greater than three contains a chord. Equivalently, $G$ is chordal if and only if there exists an ordering of its vertices
$v_1,\dots,v_m$ such that for each $j>1$, the set of vertices $v_k$ adjacent to $v_j$ with $k<j$ form a clique (possibly empty). 
\end{Definition}

\begin{Theorem}
If $G$ is a chordal graph, then $\Hom_n^\Path(G) \cong \Hom_n^\Cube(G) \cong (0)$ for $n>0$.
\end{Theorem}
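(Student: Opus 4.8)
The plan is to induct on the number of vertices of $G$, using the perfect elimination ordering from \ref{def-chordal}. Let $v_1,\dots,v_m$ be such an ordering, and let $x = v_m$ be the last vertex, so that the neighbors of $x$ in $G$ form a clique $C$. Set $H = G\backslash x$, which is again chordal (the ordering $v_1,\dots,v_{m-1}$ still works), so by induction $\Hom_n^\Path(H) \cong \Hom_n^\Cube(H) \cong (0)$ for $n>0$. The goal is then to show that deleting the simplicial vertex $x$ does not change the homology in positive dimensions. If $C = \emptyset$, then $x$ is isolated and the result follows from \ref{disjointsum} together with the observation that a single vertex has trivial reduced homology; if $C$ consists of a single vertex $y$, then the map $r$ of \ref{retract-def} is a one-step deformation retraction of $G$ onto $H$ and we are done by \ref{equiv-thm}. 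So the interesting case is $|C|\ge 2$.

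When $|C|\ge 2$, $x$ is not a leaf and \ref{retract-def} need not be a graph homomorphism issue — in fact it still is a homomorphism (any edge $\{x,z\}$ with $z\in C$, $z\ne y$, maps to $\{y,z\}\in E(G)$ since $C$ is a clique), so $r$ as in \ref{retract-def} with $y\in C$ arbitrary is a one-step deformation retraction of $G$ onto $H$ \emph{as long as $\{x,r(x)\}=\{x,y\}$ is an edge}, which it is. Thus in fact \ref{retract-def} always gives a one-step deformation retraction of a chordal graph onto the deletion of a simplicial vertex, \emph{provided $C\ne\emptyset$}: $r$ is a retraction onto the induced subgraph $H$, it is a graph homomorphism because $C$ is a clique, and $\{x,r(x)\}\in E(G)$. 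So by \ref{equiv-thm} we get $\Hom_n^\Cube(G)\cong\Hom_n^\Cube(H)$ and $\Hom_n^\Path(G)\cong\Hom_n^\Path(H)$ for all $n\ge 0$. Combining with the isolated-vertex case handled via \ref{disjointsum}, the induction goes through and the base case $m=1$ is a single vertex with trivial reduced homology in both theories.

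The one point that genuinely needs care — and what I expect to be the main (though modest) obstacle — is verifying that the retraction $r$ from \ref{retract-def} associated to a simplicial vertex really is a graph homomorphism and really satisfies the one-step condition $\{x, r(x)\}\in E(G)$ simultaneously, i.e. that one can choose the image vertex $y$ to lie in the clique $C$ of neighbors. Once $y\in C$ is fixed, an edge $\{x,z\}$ of $G$ has $z\in C$, so $r$ sends it to $\{y,z\}$, which is an edge of $G$ (and of $H$) because $C$ is a clique, or to a single vertex if $z=y$; every edge not incident to $x$ is fixed. Hence $r$ is a homomorphism $G\to H$, it restricts to the identity on $H$, and $\{x,r(x)\}=\{x,y\}\in E(G)$, so by the equivalent formulation in \ref{retractx-def}(3) it is a one-step deformation retraction. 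This is the step where chordality is used in an essential way: for a general graph the neighborhood of a vertex need not be a clique, so $r$ need not be a homomorphism.

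I would therefore present the proof as: (1) reduce to showing that deleting a simplicial vertex preserves both homologies; (2) if the vertex is isolated, invoke \ref{disjointsum}; (3) otherwise, exhibit \ref{retract-def} (with image in the neighborhood clique) as a one-step deformation retraction and invoke \ref{equiv-thm}; (4) induct on $|V(G)|$ down to the single-vertex base case. No tedious computation is required beyond the homomorphism check in step (3).
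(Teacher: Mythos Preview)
Your proposal is correct and follows essentially the same approach as the paper: both arguments use the perfect elimination ordering to peel off a simplicial vertex, handle the isolated case via \ref{disjointsum}, and otherwise retract the vertex onto a neighbor in its clique to obtain a one-step deformation retraction, invoking \ref{equiv-thm}. The only cosmetic difference is that the paper inducts upward (from $G^{(j)}$ to $G^{(j+1)}$) while you induct downward (from $G$ to $G\backslash v_m$); the key homomorphism check---that edges $\{x,z\}$ map to edges $\{y,z\}$ because the neighborhood is a clique---is the same, and the paper's ``easy to check'' is exactly the verification you spell out.
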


\begin{proof}
Suppose that $n>0$ and $v_1,\dots,v_m$ is an ordering of $V(G)$ satisfying the condition of \Ref{def-chordal}. For $j \in [m]$, let $G^{(j)}$ denote the  induced subgraph of $G$ whose vertex set is 
$\{v_1,\dots,v_j\}$. Proceeding by induction, suppose that
$\Hom_n^\Path(G^{(j)}) \cong \Hom_n^\Cube(G^{(j)}) \cong (0)$. If $v_{j+1}$ has no neighbors in $G^{(j)}$, it follows from 
\Ref{disjointsum} that 
$\Hom_n^\Path(G^{(j+1)}) \cong \Hom_n^\Cube(G^{(j+1)}) \cong (0)$. Otherwise, suppose that $v_{j+1}$ has neighbors in $G^{(j)}$ and let $v_k$ with
$k<j+1$ be one of them. It is easy to check that the map from 
$G^{(j+1)}$ to $G^{(j)}$ defined by sending $ v_{j+1}$ to $v_k$ and fixing the remaining elements of $G^{(j)}$ is a $1$-step deformation retraction. Hence $G^{(j+1)}$ has trivial homology, by \Ref{equiv-thm}.
\end{proof}

The next theorem shows how the homology theories $\Hom^\Cube_\bullet$ and $\Hom_\bullet^\Path$ behave with respect to three well-known types of graph products.  
One of these, the {\em box product} $G\,\Box\, H$ has already been defined in \Ref{boxdef}. The next definition introduces two more.
For a more complete treatment of these constructions, see \cite{Hammack11}. 

\begin{Definition}
Suppose that $G$ and $K$ are graphs. Define the {\em strong product} $G\boxtimes K$ and the {\em lexicographic product} $G[K]$ as graphs whose vertex set is the Cartesian product set $V(G)\times V(K)$, and whose edges are 
pairs $\{(g_1,k_1), (g_2,k_2)\}$
defined by the following rules:
\begin{enumerate}
\item
$(g_1,k_1) \sim_\boxtimes (g_2,k_2) \textrm{ iff }
((g_1=g_2)\wedge (k_1 \sim k_2))\vee ((g_1 \sim g_2) \wedge (k_1=k_2))\vee ((g_1 \sim g_2) \wedge (k_1\sim k_2))$
\item
$(g_1,k_1) \sim_\textrm{lex}
(g_2,k_2)\textrm{ iff }
((g_1\sim g_2))\vee ( (g_1 = g_2) \wedge (k_1\sim k_2))$.
\end{enumerate}

\end{Definition}

\begin{Theorem}\label{retract-products}
Suppose that $G$ and $K$ are graphs, and $H$ is an induced subgraph of $K$. Suppose that $r: V(K)\to V(H)$ is a retraction of $K$ onto $H$. Then for all $n\ge 0$,
\begin{enumerate}
\item
$\Hom_n^\Cube(G\,\Box\, K) \cong \Hom_n^\Cube(G\,\Box\, H)$ \textrm{ and }
$\Hom_n^\Path(G\,\Box \, K) \cong \Hom_n^\Path(G\, \Box\, H)$,
\item
$\Hom_n^\Cube(G\boxtimes K) \cong \Hom_n^\Cube(G \boxtimes H)$ \textrm{ and }
$\Hom_n^\Path(G\boxtimes K) \cong \Hom_n^\Path(G \boxtimes H)$,
\item
$\Hom_n^\Cube(G[K]) \cong \Hom_n^\Cube(G[H])$ \textrm{ and }
$\Hom_n^\Path(G[K]) \cong \Hom_n^\Path(G[H])$.
\end{enumerate}
\end{Theorem}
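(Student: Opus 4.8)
The plan is to deduce all six isomorphisms from a single application of \Ref{equiv-thm}, by promoting the retraction $r\colon K\to H$ to a deformation retraction of each product onto the corresponding subproduct. Write $\star$ for any one of the three products $\Box$, $\boxtimes$, $[\,\cdot\,]$, let $\iota\colon H\to K$ denote the inclusion, and set $\rho := \id_G\star r\colon G\star K\to G\star K$, $(g,k)\mapsto(g,r(k))$. First I would record three elementary facts, each read off directly from the relevant adjacency rule: (a) $G\star H$ is an \emph{induced} subgraph of $G\star K$, using that $H$ is induced in $K$; (b) $\rho$ takes values in $G\star H$ and is a graph homomorphism, because the $G$-coordinate is untouched while $r$ carries $K$-edges to edges-or-vertices of $H$ (and the same remark disposes of the additional ``diagonal'' clauses in the strong and lexicographic cases); (c) $\rho$ fixes $G\star H$ pointwise, since $r$ fixes $H$. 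Together (a)--(c) say that $\rho$ is a retraction of $G\star K$ onto $G\star H$.

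The heart of the matter is to upgrade $\rho$ to a deformation retraction. Let $\Psi\colon K\,\Box\, I_m\to K$ be a homotopy with $\Psi(\,\cdot\,,0)=\id_K$ and $\Psi(\,\cdot\,,m)=\iota r$, afforded by the hypothesis on $r$ (\Ref{retractx-def}(2)), and define
\[
\Phi\colon (G\star K)\,\Box\, I_m\longrightarrow G\star K,\qquad \Phi\big((g,k),j\big) := \big(g,\Psi(k,j)\big).
\]
Then $\Phi(\,\cdot\,,0)=\id_{G\star K}$ and $\Phi(\,\cdot\,,m)=\rho$, so as soon as $\Phi$ is known to be a graph homomorphism it is a homotopy between $\id_{G\star K}$ and $\rho$; hence $\rho$ is a deformation retraction of $G\star K$ onto $G\star H$, and \Ref{equiv-thm} yields $\Hom_n^\Cube(G\star K)\cong\Hom_n^\Cube(G\star H)$ and $\Hom_n^\Path(G\star K)\cong\Hom_n^\Path(G\star H)$ for all $n\ge 0$, which is precisely the assertion for that product.

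Thus everything reduces to checking that $\Phi$ preserves adjacency, and this is the step I expect to be the main (if routine) obstacle, since it has to be run through for each of the three different edge-sets. An edge of $(G\star K)\,\Box\, I_m$ is either \emph{temporal} (the pair $(g,k)$ is fixed and $j$ moves along $I_m$) or \emph{spatial} (the coordinate $j$ is fixed and $(g_1,k_1)\sim_\star(g_2,k_2)$). The temporal case is immediate: $(k,j_1)\sim(k,j_2)$ in $K\,\Box\, I_m$, so $\Psi(k,j_1)$ and $\Psi(k,j_2)$ are equal or adjacent in $K$, and in every one of the three products ``equal $G$-coordinate, equal-or-adjacent $K$-coordinate'' gives an edge or a single vertex. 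For the spatial case one expands the definition of $\sim_\star$ into its clauses and applies a uniform mechanism: if $k_1=k_2$ then $\Psi(k_1,j)=\Psi(k_2,j)$; if $k_1\sim_K k_2$ then $(k_1,j)\sim(k_2,j)$ in $K\,\Box\, I_m$, so $\Psi(k_1,j)$ and $\Psi(k_2,j)$ are equal or $\sim_K$; and a relation $g_1\sim_G g_2$ in the $G$-coordinate is carried through unchanged. Matching clauses then shows that the image pair is again a $\star$-edge or a repeated vertex; the only thing to watch is that $k_1\sim_K k_2$ may collapse to $\Psi(k_1,j)=\Psi(k_2,j)$, which is harmless. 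The box product is the cleanest instance --- there $\Phi$ is literally $\id_G\,\Box\,\Psi$ under the associativity identification $(G\,\Box\, K)\,\Box\, I_m=G\,\Box\,(K\,\Box\, I_m)$ --- while the strong and lexicographic products require the full clause matching, with the extra observation for the lexicographic product that a $G$-edge dominates regardless of the $K$-coordinates. No genuine difficulty appears in any case.
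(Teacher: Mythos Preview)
Your proof is correct and follows exactly the paper's strategy---define $(g,k)\mapsto(g,r(k))$, argue it is a deformation retraction of $G\star K$ onto $G\star H$, and invoke \Ref{equiv-thm}---but you actually carry out the verifications the paper dismisses as ``straightforward,'' in particular by explicitly lifting the homotopy $\Psi$ to $\Phi$ and checking adjacency clause by clause. You have also (correctly, and necessarily) read the hypothesis on $r$ as \emph{deformation} retraction via \Ref{retractx-def}(2) rather than mere retraction; as literally stated the theorem would be false, since every graph retracts onto a single vertex.
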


\begin{proof}
It suffices to prove that the map 
\[
(g,k) \longmapsto (g,r(k)) 
\]
defines a retraction from $G\,\Box\, H$ to $G\, \Box\, K$, from 
$G\boxtimes H$ to $G \boxtimes K$, and from $G[K]$ to $G[H]$. The arguments in 
each case are straightforward.
\end{proof}

\section{A map between the chain complexes $\CC^\Cube(G)$ and $\CC^\Path(G)$}
\label{sec-map}

In this section we establish a map between the chain complexes $\CC^\Cube(G)$ and $\CC^\Path(G)$.  
Consider a singular $n$-cube $\sigma : Q_n \rightarrow G$. 
In order to define a map from $\CC^\Cube_n(G)$ to $\CC_n^\Path(G)$, we first 
associate to any permutation
$\tau \in S_n$ a path 
$p_\tau$ from $(0,\ldots,0) \in V(Q_n)$ to 
$(1,\ldots,1) \in V(Q_n)$.
The path $p_\tau$ is defined as the path of length $n-1$ which in its 
$i$\textsuperscript{th} step
flips the $\tau(i)$\textsuperscript{th} coordinate from $0$ to $1$.
We write $p_\tau(i)$ for the $i$\textsuperscript{th} vertex in the path
$p_\tau$, $0 \leq i \leq n$.

To $\sigma \in \CC_n^\Cube(G)$, we assign the element

$$\psi(\sigma) := \sum_{\tau \in S_n} \sgn(\tau) \, \sigma \circ p_\tau$$

\noindent of $\CC_n^\Path(G)$, where $\sigma \circ p_\tau$ denotes the path in $G$
whose $i$\textsuperscript{th} vertex is $\sigma(p_\tau(i))$.
Note that we can have $\sigma \circ p_\tau = 0$ as there may be adjacent 
vertices in $Q_n$ that are sent by $\sigma$ to the same vertex of $G$. 

\begin{Lemma}
  \label{map}
  Let $\sigma \in \CC_n^\Cube(G)$. Then
  { \setstretch{1.5}
  \begin{itemize}
    \item[(i)] 
       $\partial_{n}^\Path \psi(\sigma) \in \LL_{n-1}^\Path(G).$ In 
       particular, $\psi(\sigma) \in \CC_n^\Path(G)$. 
    \item[(ii)] 
       $\partial_{n}^\Path \psi(\sigma) = \psi(\partial_{n}^\Cube (\sigma)).$
  \end{itemize}}
\end{Lemma}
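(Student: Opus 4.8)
## Proof Proposal

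The plan is to prove both parts by a direct computation with the explicit formula $\psi(\sigma) = \sum_{\tau \in S_n} \sgn(\tau)\, \sigma \circ p_\tau$, organizing the bookkeeping so that the sign cancellations become visible. I would begin by recording the combinatorial structure of a single term. Fix $\tau \in S_n$; the path $p_\tau$ visits the vertices $p_\tau(0) = \mathbf{0}$, then $p_\tau(k)$ obtained by setting coordinates $\tau(1),\dots,\tau(k)$ to $1$, up to $p_\tau(n) = \mathbf{1}$. Applying $\partial_n^\Path$ to $\sigma \circ p_\tau$ deletes, with alternating sign $(-1)^k$, the $k$-th intermediate vertex $\sigma(p_\tau(k))$ for $k = 0,\dots,n$. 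The key observation is that deleting the interior vertex $p_\tau(k)$ for $1 \le k \le n-1$ produces a path that is \emph{no longer} along the edges of $Q_n$ (it jumps across two coordinate flips at once), but its $\sigma$-image may still fail to be a path in $G$; however, these ``bad'' interior-deletion terms will cancel in pairs. Specifically, I would pair the term coming from $(\tau, k)$ — deleting $p_\tau(k)$ — with the term coming from $(\tau', k)$, where $\tau'$ is $\tau$ with the values in positions $k$ and $k+1$ swapped: $\tau' = \tau \circ (k\ k{+}1)$. Then $p_{\tau'}$ agrees with $p_\tau$ except at the single vertex in position $k$, so after deleting position $k$ the two paths $\sigma\circ p_\tau$ and $\sigma \circ p_{\tau'}$ become literally equal as $(n-1)$-tuples, while $\sgn(\tau') = -\sgn(\tau)$. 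Hence all interior-deletion terms cancel.

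This pairing argument simultaneously yields (i): after cancellation, $\partial_n^\Path \psi(\sigma)$ is the sum over $\tau \in S_n$ of only the two \emph{endpoint-deletion} terms, namely $+\,\sigma\circ(p_\tau(1),\dots,p_\tau(n))$ (deleting $k=0$, sign $(-1)^0 = +1$) and $(-1)^n\,\sigma\circ(p_\tau(0),\dots,p_\tau(n-1))$ (deleting $k=n$). Each of these remaining tuples \emph{is} a sequence of $Q_n$-edges, since we only dropped an endpoint of the path $p_\tau$; therefore its $\sigma$-image lies in $\LL_{n-1}^\Path(G)$. Summing over $\tau$ keeps us in $\LL_{n-1}^\Path(G)$, which is exactly the claim $\partial_n^\Path \psi(\sigma) \in \LL_{n-1}^\Path(G)$. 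By the definition $\CC_n^\Path(G) = (\partial_n^\Path)^{-1}[\widetilde{\CC}_{n-1}^\Path(G)]$, and since $\LL_{n-1}^\Path(G)$ maps into $\widetilde{\CC}_{n-1}^\Path(G)$ under passing to cosets, this gives $\psi(\sigma) \in \CC_n^\Path(G)$.

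For (ii), I would now compute $\psi(\partial_n^\Cube \sigma)$ directly from the definition and match it with the surviving terms from (i). We have $\partial_n^\Cube \sigma = \sum_{i=1}^n (-1)^i (f_i^-\sigma - f_i^+\sigma)$, and $\psi$ applied to an $(n-1)$-cube $f_i^\epsilon \sigma$ sums $\sgn(\rho)\,(f_i^\epsilon\sigma)\circ p_\rho$ over $\rho \in S_{n-1}$. The face $f_i^-\sigma$ (resp. $f_i^+\sigma$) is the restriction of $\sigma$ to the subcube where coordinate $i$ is held at $0$ (resp. $1$); composing with a path $p_\rho$ in $Q_{n-1}$ that flips the other $n-1$ coordinates in the order $\rho$ gives precisely one of the endpoint-deletion paths from part (i): deleting the last vertex of $p_\tau$ corresponds to holding the coordinate $\tau(n) = i$ at $0$ throughout, i.e. to $f_i^-\sigma$ composed with the induced $(n-1)$-permutation; deleting the first vertex corresponds to holding $\tau(1) = i$ at $1$, i.e. to $f_i^+\sigma$. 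The remaining task is purely a matter of matching signs: one checks that the sign $(-1)^i$ from the cubical boundary, the relative sign between $f_i^-$ and $f_i^+$, and the signs $(-1)^0$ and $(-1)^n$ from the path boundary, together with the sign of $\tau$ versus the sign of the induced $(n-1)$-permutation, all combine correctly. I expect this sign-reconciliation to be the main obstacle — not conceptually deep, but requiring a careful, uniform convention for how a permutation $\tau \in S_n$ with $\tau(n) = i$ (or $\tau(1) = i$) restricts to a permutation of $[n]\setminus\{i\} \cong [n-1]$, so that the bijection between terms on the two sides is sign-preserving. Once that convention is fixed, the identity $\partial_n^\Path \psi(\sigma) = \psi(\partial_n^\Cube \sigma)$ follows term by term, and the proof is complete.
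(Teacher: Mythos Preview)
Your proposal is correct and follows essentially the same argument as the paper: cancel the interior-deletion terms in $\partial_n^\Path\psi(\sigma)$ by pairing $\tau$ with $\tau\circ(k\ k{+}1)$, leaving only the two endpoint-deletion terms (which are genuine paths, giving (i)), and then identify the surviving term with $\tau(n)=i$ (resp.\ $\tau(1)=i$) with $(f_i^-\sigma)\circ p_{\tau^-}$ (resp.\ $(f_i^+\sigma)\circ p_{\tau^+}$) to obtain (ii). The sign reconciliation you flag as the remaining work is exactly what the paper does explicitly via the inversion counts $\sgn(\tau)=(-1)^{n-i}\sgn(\tau^-)$ and $\sgn(\tau)=(-1)^{i-1}\sgn(\tau^+)$, so your outline matches the paper's proof step for step.
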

\begin{proof}
   For part (i) we have
  \begin{eqnarray} 
    \label{eq:yaudiff}
      \partial_n^\Path (\sigma \circ p_\tau) & = & \sum_{i=0}^n (-1)^i \cdot 
         \sigma (p_\tau(0)) \cdots \widehat{\sigma( p_\tau(i))} \cdots 
         \sigma(p_\tau(n)). 
    \end{eqnarray} 
    Let $1 \leq \ell \leq n-1$. If $\tau'$ is constructed from $\tau$ by interchanging 
    $\tau(\ell)$ and $\tau(\ell+1)$ then $p_\tau(i) = p_{\tau'}(i)$ for $i \neq \ell$.
    In particular, the $\ell$\textsuperscript{th} summands of
    \eqref{eq:yaudiff} for $p_\tau$ and $p_{\tau'}$ coincide. In addition, we 
     have $\sgn(\tau) = -\sgn(\tau')$. 
     This shows that 
     \begin{eqnarray}
       \label{eq:yaudiff2}
       \partial_n^\Path \psi(\sigma) & = & \sum_{\tau \in S_n} \sgn(\tau) \, \partial_n^\Path (\sigma \circ p_\tau) \\ 
                  \nonumber  & = & \sum_{\tau \in S_n} \sgn(\tau) \, \Big( (\sigma(p_\tau (1)) ,\ldots, \sigma( p_\tau (n))) + \\
                  \nonumber  &   & \qquad\qquad\qquad\qquad(-1)^n (\sigma(p_\tau(0)) ,\ldots, \sigma(p_\tau(n-1))) \Big). 
     \end{eqnarray}
    Since both $(\sigma(p_\tau (1)), \ldots ,\sigma( p_\tau (n)))$ and
    $(\sigma(p_\tau (0)) \cdots \sigma( p_\tau (n-1)))$ are paths it follows that
    $\partial_{n}^\Path \psi(\sigma) \in \LL_{n-1}^\Path(G)$. As a consequence
    $\psi(\sigma) \in \CC_n^\Path(G)$, and we have proved (i).
    
   For part (ii), suppose that $\tau \in S_{n}$. Define 
    \begin{itemize}
       \item $\tau^- \in S_{n-1}$ to be the permutation
          where $\tau^- (j) = \tau(j)$ if $\tau(j) < \tau(n)$ and $\tau(j)-1$ if 
          $\tau(j) > \tau(n)$. 
       \item $\tau^+ \in S_{n-1}$ to be the permutation
          where $\tau^+ (j) = \tau(j+1)$ if $\tau(j+1) < \tau(1)$ and $\tau(j+1)-1$ if 
          $\tau(j+1) > \tau(1).$ 
   \end{itemize} 
   Now for $j = \tau(n)$ and $j' = \tau(1)$ we have  
   \begin{eqnarray*}
      (\sigma(p_\tau(0)),\ldots, \sigma(p_\tau(n-1))) & = & (\sigma (p_{\tau^-} (0))
          \ldots \sigma (p_{\tau^-} (n-1))), \\  
      (\sigma(p_\tau(1)),\ldots, \sigma(p_\tau(n))) & = & (\sigma (p_{\tau^+} (0),
          \ldots , \sigma (p_{\tau^+}(n-1))).
   \end{eqnarray*}
   Now 
   \begin{eqnarray*}
      \psi(f_i^- \sigma) & = & \sum_{\tau' \in S_{n-1}} \sgn(\tau') \, (f_i^- \sigma) \circ p_{\tau'} \\ 
      \psi(f_i^+ \sigma) & = & \sum_{\tau' \in S_{n-1}} \sgn(\tau') \, (f_i^+ \sigma) \circ p_{\tau'} \\ 
   \end{eqnarray*}

   Since for $\tau \in S_n$ we have that 
   $\tau$ is determined by $\tau^-$ and $\tau(n)$, as well as by $\tau^+$ 
   and $\tau(n)$ it follows that:
   \begin{eqnarray*}
      \psi(f_i^- \sigma) & = & \sum_{{\tau \in S_{n}},{\tau(n) = i}} \sgn(\tau^-) \, (f_i^- \sigma) \circ p_{\tau^-} \\ 
      \psi(f_i^+ \sigma) & = & \sum_{{\tau \in S_{n}}, {\tau(1) = i}} \sgn(\tau^+) \, (f_i^+ \sigma) \circ p_{\tau^+} 
   \end{eqnarray*}
   Since $\tau(1)=i$ contributes $i-1$ inversions and $\tau(n) = i$ in the last position $n-i$ inversions to $\tau$ we obtain 
   \begin{eqnarray}
      \label{eq:faceim1}
        \psi(f_i^- \sigma) & = & (-1)^{n-i} \sum_{{\tau \in S_{n}},{\tau(n) = i}} \sgn(\tau) \, (f_i^- \sigma) \circ p_{\tau^-} \\ 
      \label{eq:faceim2}
        \psi(f_i^+ \sigma) & = & (-1)^{i-1} \sum_{{\tau \in S_{n}} ,{\tau(1) = i}} \sgn(\tau) \, (f_i^+ \sigma) \circ p_{\tau^+} 
   \end{eqnarray}
   We have 
   \begin{eqnarray*}
      \partial_n^\Cube \sigma & = & \sum_{i=1}^n (-1)^i (f_i^- \sigma - f_i^+ \sigma).
   \end{eqnarray*}
   Thus by \eqref{eq:faceim1}, \eqref{eq:faceim2} and \eqref{eq:yaudiff2} we get  
   \begin{eqnarray*}
      \psi(\partial_n^\Cube \sigma) & = & \sum_{i=1}^n (-1)^i \, (\psi(f_i^- \sigma) - \psi(f_i^+ \sigma)) \\
                                    & = & \sum_{i=1}^n (-1)^i \Big( (-1)^{n-i} \sum_{{\tau \in S_{n}}, {\tau(\ell) = i}} \sgn(\tau) \, (f_i^- \sigma) \circ p_{\tau^-} \, - \\
                                &   & (-1)^{i-1} \sum_{{\tau \in S_{n}} , {\tau(1) = i}} \sgn(\tau) \, (f_i^+ \sigma) \circ p_{\tau^+} \Big) \\
                                & = &  \sum_{\tau \in S_n} \sgn(\tau) 
                 \Big( (-1)^n (\sigma(p_\tau(0)), \ldots,\sigma(p_\tau(n-1))) + \\
                                &   & 
 (\sigma (p_\tau (1),\ldots, \sigma(p_\tau (n)) \Big) \\
                                & = & \partial_n^\Path \psi(\sigma) .
   \end{eqnarray*} 

  \end{proof}

  Next we show that degenerate cubes are mapped to zero under $\psi$. 

  \begin{Lemma}
    \label{degen}
    For any degenerate singular $n$-cube $\sigma\in \LL_n^\Cube(G)$ we have 
    $\psi(\sigma) = 0$.
    In particular, $\psi$ induces a map $\psi : \CC_n^\Cube(G) \rightarrow \CC_n^\Path(G)$.
  \end{Lemma}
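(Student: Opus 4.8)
The plan is to show that $\psi$ kills every degenerate singular $n$-cube $\sigma$, which by linearity reduces to the following: if $\sigma : Q_n \to G$ satisfies $f_i^+\sigma = f_i^-\sigma$ for some fixed $i \in [n]$, then $\sum_{\tau \in S_n} \sgn(\tau)\, \sigma \circ p_\tau = 0$ in $\CC_n^\Path(G)$. The key observation is that the condition $f_i^+\sigma = f_i^-\sigma$ says exactly that the value $\sigma(a_1,\dots,a_n)$ does not depend on the coordinate $a_i$; equivalently, for every vertex $q \in Q_n$, flipping the $i$\textsuperscript{th} coordinate of $q$ does not change $\sigma(q)$.

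First I would fix such an $i$ and partition $S_n$ into pairs $\{\tau, \tau'\}$ as follows. For $\tau \in S_n$, let $\ell = \tau^{-1}(i)$, the step at which the path $p_\tau$ flips coordinate $i$, and let $\tau'$ be obtained from $\tau$ by swapping the values in positions $\ell$ and $\ell+1$ when $\ell < n$ (the case $\ell = n$ needs separate handling — see below). Then $\sgn(\tau') = -\sgn(\tau)$, and the two paths $p_\tau$ and $p_{\tau'}$ agree at every vertex except the $\ell$\textsuperscript{th}, where they differ precisely by a flip of coordinate $i$. By the degeneracy hypothesis, $\sigma(p_\tau(\ell)) = \sigma(p_{\tau'}(\ell))$, so $\sigma \circ p_\tau$ and $\sigma \circ p_{\tau'}$ are the \emph{same} $(n+1)$-tuple of vertices in $G$; since their signs are opposite, the two terms cancel in the sum. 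This is the same cancellation mechanism already used in the proof of \ref{map}(i) (around \eqref{eq:yaudiff2}), now exploited globally rather than locally.

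The main obstacle is the boundary case: if $\tau(n) = i$, then coordinate $i$ is flipped at the last step $\ell = n$, and there is no position $\ell+1$ to swap with, so the pairing above is undefined. I would handle this by instead pairing such a $\tau$ with the permutation $\tau'$ obtained by swapping positions $n-1$ and $n$; then $p_\tau$ and $p_{\tau'}$ differ only at vertex $p_\tau(n-1)$, which differs from $p_{\tau'}(n-1)$ by a flip in coordinate $i$ (note $p_\tau(n-1)$ has a $0$ in coordinate $i$ while $p_{\tau'}(n-1)$ has a $1$ there, and all other coordinates agree). Again the degeneracy hypothesis forces $\sigma(p_\tau(n-1)) = \sigma(p_{\tau'}(n-1))$, so $\sigma \circ p_\tau = \sigma \circ p_{\tau'}$ with opposite signs. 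One must check that this prescription, combined with the earlier one, is a well-defined fixed-point-free involution on $S_n$: for $\tau$ with $\tau^{-1}(i) = \ell$, pair using positions $\{\ell,\ell+1\}$ if $\ell < n$ and positions $\{n-1,n\}$ if $\ell = n$; since swapping the two positions adjacent to where $i$ sits preserves the location of $i$ among "interior vs. last", the pairing is consistent and has no fixed points (a transposition is never the identity). The "in particular" clause then follows formally: since $\psi$ vanishes on $\DD_n^\Cube(G)$ and $\psi(\sigma) \in \CC_n^\Path(G)$ by \ref{map}(i), $\psi$ descends to a well-defined $R$-linear map $\CC_n^\Cube(G) = \LL_n^\Cube(G)/\DD_n^\Cube(G) \to \CC_n^\Path(G)$.
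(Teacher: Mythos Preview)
Your pairing argument has a genuine gap. When you swap positions $\ell$ and $\ell+1$ in $\tau$ (where $\ell = \tau^{-1}(i)$), the vertices $p_\tau(\ell)$ and $p_{\tau'}(\ell)$ differ in \emph{two} coordinates, not one: $p_\tau(\ell)$ has a $1$ in coordinate $i=\tau(\ell)$ and a $0$ in coordinate $\tau(\ell+1)$, while $p_{\tau'}(\ell)$ has a $0$ in coordinate $i$ and a $1$ in coordinate $\tau(\ell+1)$. The degeneracy hypothesis lets you ignore only coordinate $i$, so there is no reason for $\sigma(p_\tau(\ell)) = \sigma(p_{\tau'}(\ell))$ to hold, and the terms need not cancel. (The same defect appears in your boundary case $\ell = n$.) Moreover, your prescription is not an involution: if $\ell < n-1$, then in $\tau'$ the value $i$ sits at position $\ell+1 < n$, so your rule applied to $\tau'$ swaps positions $\ell+1$ and $\ell+2$, which does not return $\tau$.

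The paper's argument avoids any pairing and is much shorter. For each $\tau$, let $\ell = \tau^{-1}(i)$; then $p_\tau(\ell-1)$ and $p_\tau(\ell)$ differ only in coordinate $i$ (this is the step at which the path crosses from the $f_i^-$ facet to the $f_i^+$ facet). The degeneracy hypothesis gives $\sigma(p_\tau(\ell-1)) = \sigma(p_\tau(\ell))$, so $\sigma \circ p_\tau$ has two consecutive equal vertices and is therefore a degenerate path, hence zero in $\CC_n^\Path(G)$. Every term of $\psi(\sigma)$ vanishes individually; no cancellation is needed.
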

  \begin{proof}
     If $\sigma \in \LL_n^\Cube(G)$ is degenerate, then there is an $i\in[n]$ such that
     $f_i^- \sigma = f_i^+ \sigma$.
     Now each path $\sigma \circ p_\tau$ in $\psi(\sigma)$ at some point passes from 
     the facet $f_i^{-} \sigma$ to the
     facet $f_i^{+} \sigma$ of the singular $n$-cube $\sigma$. 
     Since  $f_i^- \sigma = f_i^+ \sigma$,
     it follows that there are two consecutive vertices in the path 
     $\sigma \circ p_\tau$ that are 
     identical. This shows
     $\sigma \circ p_\tau = 0$ in $\LL_n^\Path(G)$.   
  \end{proof}
  
  For small $n$ we have good control over $\psi$.
  
  \begin{Lemma}
    \label{lem:smalln}
    The map $\psi : \CC_n^\Cube(G)
    \rightarrow \CC_n^\Path(G)$ is an isomorphism for
    $n \leq 1$ and surjective for $n =2$. 
  \end{Lemma}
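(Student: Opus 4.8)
The plan is to analyze $\psi$ separately in dimensions $0$, $1$, and $2$, using the explicit descriptions of the chain groups that were set up in \ref{sec-definitions}.

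For $n=0$ the map $\psi$ is the identity on the common basis of $0$-cubes, since $S_0$ (or $S_1$, depending on indexing conventions) is trivial and a singular $0$-cube is just a vertex; both $\CC_0^\Cube(G)$ and $\CC_0^\Path(G)$ are the free $R$-module on $V(G)$, and $\psi$ sends a vertex to itself. For $n=1$, $S_1$ is trivial, so $\psi(\sigma) = \sigma \circ p_{\id}$, and $p_{\id}$ is the length-$1$ path from $0$ to $1$ in $Q_1$; thus $\psi$ sends the singular edge $\sigma = (\sigma(0),\sigma(1))$ to the path $(\sigma(0),\sigma(1))$. A non-degenerate singular $1$-cube is exactly an ordered pair of adjacent vertices, which is exactly a non-degenerate element of $\CC_1^\Path(G)$, and degenerate maps to degenerate (consistent with \ref{degen}); hence $\psi$ is a bijection on the distinguished bases, so an isomorphism. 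I would phrase both of these as one short paragraph, matching the observation already made in the text that $\CC_i^\Cube(G)$ and $\CC_i^\Path(G)$ are ``identical'' for $i \le 1$.

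The real content is $n=2$, where I must show $\psi: \CC_2^\Cube(G) \to \CC_2^\Path(G)$ is surjective. Here $S_2 = \{\id, (12)\}$, so for a singular $2$-cube $\sigma$ given by its values $(\sigma_{00}, \sigma_{10}, \sigma_{01}, \sigma_{11})$ on $Q_2$ (colex order), $\psi(\sigma) = (\sigma_{00},\sigma_{10},\sigma_{11}) - (\sigma_{00},\sigma_{01},\sigma_{11})$: the difference of the two ``monotone'' corner paths. So I need to show that every element of $\CC_2^\Path(G)$ is an $R$-linear combination of such differences coming from genuine graph homomorphisms $Q_2 \to G$. Recall $\CC_2^\Path(G) = \partial_2^{-1}[\widetilde{\CC}_1^\Path(G)]$; its standard generators (as illustrated for the $4$-cycle) are of two types: the degenerate-looking ``double'' paths $(u,v,u)$ with $\{u,v\}\in E$, and the ``square'' differences $(u,v,w) - (u,v',w)$ where $u \sim v \sim w$, $u \sim v' \sim w$, and $u \ne w$ (the condition forcing the boundary to lie in $\widetilde{\CC}_1^\Path$), plus possibly terms $(u,v,w)$ with $u=w$, i.e. $(u,v,u)$ again. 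For a generator $(u,v,w)-(u,v',w)$ with $u\neq w$: if additionally $u \sim w$, then $(u,v,w,v')$ — wait, I need a homomorphism $Q_2 \to G$; take $\sigma$ with corners $\sigma_{00}=u$, $\sigma_{10}=v$, $\sigma_{01}=v'$, $\sigma_{11}=w$; this is a graph homomorphism precisely because $u\sim v$, $u \sim v'$, $v \sim w$, $v' \sim w$ are all edges (the four edges of $Q_2$ map to edges or collapse, which is fine), and then $\psi(\sigma) = (u,v,w) - (u,v',w)$ exactly. For a generator $(u,v,u)$ with $\{u,v\}\in E$: take $\sigma$ with corners $\sigma_{00}=u$, $\sigma_{10}=v$, $\sigma_{01}=u$, $\sigma_{11}=u$ — wait, then the $00$–$01$ edge collapses and $10$–$11$ needs $v\sim u$, ok; $\psi(\sigma) = (u,v,u) - (u,u,u) = (u,v,u)$ since the second path is degenerate, hence zero in $\CC_2^\Path$. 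So each standard generator of $\CC_2^\Path(G)$ is hit by $\psi$, giving surjectivity.

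The main obstacle — and the step that needs care rather than cleverness — is identifying a correct generating set for $\CC_2^\Path(G)$ in terms of actual paths in $G$, i.e. pinning down exactly which differences $(u,v,w)-(u,v',w)$ and which single triples survive the condition $\partial_2(\cdot) \in \widetilde{\CC}_1^\Path(G)$, and then checking in each case that the preimage $2$-cube I write down is a legitimate graph homomorphism $Q_2 \to G$ (allowing collapsed edges) whose image under $\psi$, after discarding paths with a repeated consecutive vertex, is precisely that generator. This is a finite case-check but one must be exhaustive about the shapes of generators; I would organize it as: (a) describe $\widetilde{\CC}_1^\Path(G)$ and the generators of $\CC_2^\Path(G)$; (b) for each generator type exhibit an explicit $\sigma \in \CC_2^\Cube(G)$; (c) verify $\psi(\sigma)$ equals it. I do not expect to need injectivity arguments or rank counts here, since only surjectivity is claimed for $n=2$ — and indeed the counterexample in \ref{sec-counter} shows injectivity fails in dimension $2$ in general.
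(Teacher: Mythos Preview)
Your approach matches the paper's: analyze $n=0,1$ by noting that $\psi$ is the identity on the common basis, then for $n=2$ identify a generating set of $\CC_2^\Path(G)$ and exhibit explicit $2$-cube preimages. The paper invokes \cite[Proposition 4.2]{GLMY1} for the generating set rather than deriving it, but otherwise the structure is the same.

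There is, however, a genuine gap in your $n=2$ case. Your claimed generating set for $\CC_2^\Path(G)$ consists only of the paths $(u,v,u)$ and the differences $(u,v,w)-(u,v',w)$ with $u\ne w$. You are missing a third type: single ``triangle'' paths $(u,v,w)$ where $u,v,w$ are distinct and $\{u,w\}\in E(G)$. In that case $\partial_2(u,v,w)=(v,w)-(u,w)+(u,v)$ already lies in $\widetilde{\CC}_1^\Path(G)$, so $(u,v,w)$ itself belongs to $\CC_2^\Path(G)$, not merely paired differences. Your generators do \emph{not} span these: for instance, if $G$ is a single triangle on $\{a,b,c\}$, the only nonzero differences $(a,v,c)-(a,v',c)$ have $v=v'=b$, hence vanish, and no combination of back-and-forth paths $(x,y,x)$ produces $(a,b,c)$. (You seem to sense this in your final paragraph when you mention checking ``which single triples survive'', but the body of the argument never treats them.) The fix is easy and parallel to what you already did for $(u,v,u)$: take $\sigma$ with $\sigma_{00}=u$, $\sigma_{10}=v$, $\sigma_{01}=u$, $\sigma_{11}=w$; this is a graph homomorphism since $u\sim v$, $v\sim w$, $u\sim w$, and the edge $00$--$01$ collapses; then $\psi(\sigma)=(u,v,w)-(u,u,w)=(u,v,w)$. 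With this third case added, your argument is complete and coincides with the paper's.
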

  \begin{proof}
     For $n = 0$ both $\CC_0^\Cube(G)$ and $\CC_0^\Path(G)$ 
     are freely generated by the vertices of $G$. Since $\psi$ maps a vertex considered a the image of a $0$-cube to 
     the path consisting of that vertex, it is clearly an
     isomorphism.
     
     Consider $n = 1$. A non-degenerate $1$-cube is represented by an edge with
     a direction chosen. Hence $\CC_1^\Cube(G)$ has a basis given by
     pairs of vertices $(v_1,v_2)$ where $\{v_1,v_2\}$ is an edge in the graph.
     But this is also a basis for $\CC_1^\Path(G)$. 
     Thus $\CC_1^\Cube(G)$ and $\CC_1^\Path(G)$ are isomorphic and $\psi$ is
     an isomorphism.
     
     Finally we turn to $n = 2$. 
     Let $\sigma \in \CC_2^\Path(G)$. Then, by the proof of 
     \cite[Proposition 4.2]{GLMY1}, $\sigma$ is a linear
     combination of paths $(v,v',v)$ and $(v,v',v'')$ where 
     $\{v,v''\}$ is an
     edge in $G$, and of chains of the form $(v,w,v') -(v,w',v')$ 
     where $w \neq w'$ and $\{v,v'\}$ is not an edge in $G$. 
     We show that all those chains are in the image of $\psi$.
     Indeed:
     \begin{center}
       \begin{tabular}{lcr}
          \begin{minipage}[t]{0.4\textwidth} 
            $$\psi \Big(
              \begin{array}{ccc} 
                 v & \rule[3pt]{1cm}{1pt} & v \\
                 \vrule & & \vrule \\
                 v & \rule[3pt]{1cm}{1pt}& v' 
             \end{array} \Big) = (v,v',v), $$
          \end{minipage}
          &  & 
          \begin{minipage}[t]{0.3\textwidth}
            $$\psi \Big(
            \begin{array}{ccc} 
                v & \rule[3pt]{1cm}{1pt} & v'' \\
                \vrule & & \vrule \\
                v & \rule[3pt]{1cm}{1pt}& v' 
            \end{array} \Big) = (v,v',v''),$$
          \end{minipage} \\ 
          \begin{minipage}[t]{0.4\textwidth}
            $$\psi \Big(
              \begin{array}{ccc} 
                 w' & \rule[3pt]{1cm}{1pt} & v' \\
                 \vrule & & \vrule \\
                 v & \rule[3pt]{1cm}{1pt}& w 
              \end{array} \Big) = (v,w,v')-(v,w',v').$$
           \end{minipage}
     & ~~~~ &
    \end{tabular}
    \end{center}
  \end{proof}
  
  Now we are in position to state and prove the consequence of
  the preceding lemmas on the relation of the two homology theories. 
  
  \begin{Theorem}
    \label{pr:yaubarcelo}
    For any $n \geq 0$, the map $\psi$ induces a homomorphism 
    $\psi_* : \Hom_n^\Cube (G) \rightarrow \Hom_n^\Path(G)$.
    For $n \leq 1$ the map $\psi_*$ is an isomorphism and
    for $n =2$ it is surjective.
  \end{Theorem}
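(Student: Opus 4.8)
The plan is to obtain $\psi_*$ and all three assertions by assembling \ref{map}, \ref{degen}, and \ref{lem:smalln} through a routine diagram chase, using the fact that a chain map induces a map on homology. By \ref{map}(ii) together with \ref{degen}, $\psi$ is a well-defined chain map from $\CC^\Cube(G)$ to $\CC^\Path(G)$, so it induces a homomorphism $\psi_* : \Hom_n^\Cube(G) \to \Hom_n^\Path(G)$ for every $n \ge 0$; this settles the first statement. What remains is to control $\psi_*$ in dimensions $\le 2$, and here \ref{lem:smalln} does the work at the chain level: $\psi$ is an isomorphism on $\CC_n$ for $n \le 1$ and surjective on $\CC_2$. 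Throughout I would work with the commuting ladder whose two rows are the tails $\CC_2 \to \CC_1 \to \CC_0$ (with boundary maps $\partial_2,\partial_1$) of the cubical and path chain complexes, and whose vertical maps are $\psi$.

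For $n = 0$: since $\psi$ is surjective on $\CC_1$ and commutes with $\partial_1$, we get $\psi(\Im\,\partial_1^\Cube) = \partial_1^\Path(\psi(\CC_1^\Cube(G))) = \Im\,\partial_1^\Path$, and since $\psi$ is an isomorphism on $\CC_0$ it descends to an isomorphism on $\Hom_0 = \CC_0/\Im\,\partial_1$. For $n = 1$: the isomorphism $\psi \colon \CC_1^\Cube(G) \to \CC_1^\Path(G)$ commutes with $\partial_1$, hence (using that $\psi$ is injective on $\CC_0$ and surjective on $\CC_1$) it carries $\Ker\,\partial_1^\Cube$ isomorphically onto $\Ker\,\partial_1^\Path$; meanwhile surjectivity of $\psi$ on $\CC_2$ gives $\psi(\Im\,\partial_2^\Cube) = \Im\,\partial_2^\Path$. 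Because $\psi$ is injective on $\CC_1$, the full preimage of $\Im\,\partial_2^\Path$ inside $\Ker\,\partial_1^\Cube$ is exactly $\Im\,\partial_2^\Cube$, and so $\psi_*$ induces an isomorphism $\Ker\,\partial_1^\Cube/\Im\,\partial_2^\Cube \to \Ker\,\partial_1^\Path/\Im\,\partial_2^\Path$, i.e.\ on $\Hom_1$.

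For $n = 2$ I would prove surjectivity directly. Let $z \in \Ker\,\partial_2^\Path \subseteq \CC_2^\Path(G)$ be a cycle. By the chain-level surjectivity of \ref{lem:smalln}, choose $w \in \CC_2^\Cube(G)$ with $\psi(w) = z$. Then $\psi(\partial_2^\Cube w) = \partial_2^\Path \psi(w) = \partial_2^\Path z = 0$, and since $\psi$ is injective on $\CC_1$ (again \ref{lem:smalln}) this forces $\partial_2^\Cube w = 0$, so $w$ is itself a cycle; hence $\psi_*[w] = [z]$. There is no deep obstacle here, since all the substance already resides in the three lemmas; the one point deserving attention is not to over-ask for the inputs — one might expect surjectivity of $\psi_*$ in dimension $2$ to also require chain-level surjectivity of $\psi$ in dimension $3$ (to account for boundaries), but injectivity of $\psi$ in dimension $1$ already forces any $\psi$-preimage of a $2$-cycle to be a $2$-cycle, so surjectivity of $\psi$ in dimension $2$ by itself suffices.
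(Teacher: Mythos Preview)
Your proof is correct and follows essentially the same approach as the paper: both deduce the induced homomorphism from \ref{map} and \ref{degen}, then use the chain-level control from \ref{lem:smalln} (isomorphism in degrees $0,1$, surjective in degree $2$) together with the chain-map identity $\psi\circ\partial^\Cube = \partial^\Path\circ\psi$ to run the routine diagram chase. Your write-up is in fact slightly more explicit than the paper's in separating out the $n=0$ case and in noting why chain-level surjectivity in dimension $3$ is not needed for the surjectivity of $\psi_*$ in dimension $2$.
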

   \begin{proof}
     The fact that $\psi$ induces a homomorphism 
     $\psi_* : \Hom_n^\Cube (G) \rightarrow \Hom_n^\Path(G)$ is
     immediate from \ref{map} and \ref{degen}. 
     Next we consider $n=0,1,2$. 
     
     By \ref{lem:smalln} we can identify 
     $\CC_1^\Cube(G)$ and $\CC_1^\Path(G)$
     by considering a non-degenerate $1$-cube $\sigma$ as the edge
     $\psi(\sigma) = (\sigma(0),\sigma(1))$.
     After this identification, the differentials $\partial_1^\Cube$ and 
     $\partial_1^\Path$ are easily seen to be identical.
     In particular, we have $\Ker\,\partial_1^\Path = \Ker\, \partial_1^\Cube$. 
     Thus we need to show that $\Im\, \partial_2^\Cube = \Im\, \partial_2^\Path$.
     By \ref{lem:smalln}, $\psi$ is an isomorphism in dimension $1$, and
     since $\psi \circ \partial_2^\Cube = \partial_2^\Path \circ \psi$,  it follows that
     $\Im\,\partial_2^\Cube \subseteq \Im\, \partial_2^\Path$. 
     Conversely, let $\sigma \in \Im\, \partial_2^\Path$. 
     Then there is $\sigma' \in \CC_2^\Path(G)$ such that
     $\partial_2^\Path(\sigma') = \sigma$. By \ref{lem:smalln}
     there is $\sigma'' \in \CC_2^\Cube(G)$ such that
     $\psi(\sigma'') = \sigma'$. Then, again 
     by $\psi \circ \partial_2^\Cube = \partial_2^\Path \circ \psi$,  it follows that $\sigma = \partial_2^\Cube$ and 
     $\Im\, \partial_2^\Path \subseteq \Im\, \partial_2^\Cube$. 
     This implies that $\psi_* : \Hom_1^\Cube(G) 
     \rightarrow \Hom_1^\Path(G)$ is an isomorphism.
     
     Now consider homological dimension $2$. 
     Let $\sigma + \Im\, \partial_2^\Path$ be an element of
     $\Hom_2^\Path(G)$. By \ref{lem:smalln} we
     know that $\phi$ is surjective in dimension $2$. Hence 
     $\phi^{-1}(\sigma)$ is non-empty. For
     $\sigma' \in \phi^{-1}(\sigma)$ we have
\begin{eqnarray*}
       \psi (\partial_2^\Cube(\sigma')) & = & \partial_2^\Path (\psi(\sigma')) \\
                                      & = & \partial_2^\Path (\sigma) \\
                                      & = & 0. 
    \end{eqnarray*}
    From \ref{lem:smalln} we know that $\psi$ is bijective 
    in dimension $1$. From that we deduce
    $\sigma' \in \Ker\, \partial_2^\Cube$. Thus
    $\sigma' + \Im\,\partial_3^\Cube(G) \in \Hom_2^\Cube(G)$
    and $\psi_*(\sigma' + \Im\, \partial_2^\Cube) = \sigma+
    \Im\, \partial_2^\Path$. Thus $\psi_*$ is surjective.
   \end{proof}

   In \cite[Theorem 1.2]{BCW} it is shown that that the abelianization of
   the discrete fundamental group $A_1(G)$ (see for example \cite{BBLL} for definitions) of a graph is isomorphic to 
   $\Hom_1^\Cube(G)$. In \cite[Theorem 4.23]{GLMY2} it is shown that
   the abelianization of the discrete fundamental group of a graph
   is isomorphic to $\Hom_1^\Path(G)$. Indeed their result is more general
   and captures all directed graphs. Now \ref{pr:yaubarcelo} 
   can be used to deduce either result from the other. 

   \begin{Corollary}
     Let $G = (V,E)$ be a graph then there are isomorphisms:
     $$\Hom_1^\Cube(G) \cong A_1(G)/[A_1(G),A_1(G)] 
                       \cong \Hom_1^\Path(G).$$ 
   \end{Corollary}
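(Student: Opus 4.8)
The plan is to read off the displayed chain of isomorphisms by combining \ref{pr:yaubarcelo} with one of the two already-known descriptions of the abelianized discrete fundamental group, so that almost all of the work has in fact already been done.

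First, I would apply \ref{pr:yaubarcelo} in homological dimension $n = 1$: it tells us that $\psi_* : \Hom_1^\Cube(G) \to \Hom_1^\Path(G)$ is an isomorphism, which is precisely the rightmost isomorphism in the statement. No additional argument is needed for this piece.

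Second, I would invoke \cite[Theorem 1.2]{BCW}, which furnishes the leftmost isomorphism $\Hom_1^\Cube(G) \cong A_1(G)/[A_1(G),A_1(G)]$. Composing it with the isomorphism $\psi_*$ from the previous step then gives $A_1(G)/[A_1(G),A_1(G)] \cong \Hom_1^\Path(G)$ as well, so the full chain of isomorphisms follows. One could equally well start instead from \cite[Theorem 4.23]{GLMY2}, which gives $A_1(G)/[A_1(G),A_1(G)] \cong \Hom_1^\Path(G)$ directly, and then use $\psi_*^{-1}$ to recover the other end; either of the two published identifications, together with \ref{pr:yaubarcelo}, suffices.

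There is no real obstacle here: the entire content of the corollary is concentrated in \ref{pr:yaubarcelo}, and the corollary merely records that the natural map $\psi$ reconciles the two a priori independent computations of first homology coming from \cite{BCW} and \cite{GLMY2}. The one point worth noting explicitly is that we need only one of those two identifications as input — the map $\psi_*$ then automatically produces the third isomorphism — so that no compatibility between the constructions in \cite{BCW} and \cite{GLMY2} has to be verified by hand.
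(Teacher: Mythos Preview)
Your proposal is correct and matches the paper's own treatment: the paper states the corollary immediately after \ref{pr:yaubarcelo}, having just observed that \cite[Theorem 1.2]{BCW} gives the left isomorphism and \cite[Theorem 4.23]{GLMY2} the right one, and that \ref{pr:yaubarcelo} lets you deduce either from the other. No separate proof is given in the paper, and your argument is exactly the intended one.
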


   \ref{pr:yaubarcelo} also raises the question if 
   $\Hom_n^\Cube(G) \cong \Hom_n^\Path(G)$ for all $n$. While we have shown
   in Section 4 that this is true for many graphs $G$, it is
   false in general.

\section{Example: $\Hom^\Cube_\bullet$ and $\Hom^\Path_\bullet$ Are Not Always Isomorphic}
\label{sec-counter}

Results in the previous two sections might suggest conjecturing that
$\Hom_\bullet^\Cube(G)$ and $\Hom_\bullet^\Path(G)$ are the same for all graphs $G$.
In this section we construct an example showing that this is not always the case.  From \ref{pr:yaubarcelo} we know that
$\psi_* :
\Hom_2^\Cube(G) \rightarrow \Hom_2^\Path(G)$ is surjective. 
The next theorem shows that it is not always injective.
 
 \medskip
\noindent
\begin{Theorem}
  \label{thm:counterexample}
  Let $G$ be the following graph:
  
\begin{center}
\begin{tikzpicture}[scale=1.5]
   \Vertex[x=1.5,y=0]{1}
   \Vertex[x=0 ,y=1]{2}
   \Vertex[x=1,y=1]{3}
    \Vertex[x=2,y=1]{4}
     \Vertex[x=3,y=1]{5}
      \Vertex[x=0,y=2]{6}
      \Vertex[x=1,y=2]{7}
      \Vertex[x=2,y=2]{8}
          \Vertex[x=3,y=2]{9}
                 \Vertex[x=1.5,y=3]{10}
                                \Edge(1)(2)
    \Edge(1)(3)  \Edge(1)(4)  \Edge(1)(5)  
    \Edge(2)(6) \Edge(2)(7) \Edge(3)(8)\Edge(3)(6)
    \Edge(4)(7) \Edge(4)(9)\Edge(5)(8)\Edge(5)(9)
    \Edge(6)(10)\Edge(7)(10)\Edge(8)(10)\Edge(9)(10)
    \end{tikzpicture}
\end{center}
Then $\Hom^\Path_2(G) \cong (0)$ and $\Hom^\Cube_2(G) \not\cong (0)$.
\end{Theorem}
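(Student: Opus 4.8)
The plan is to prove the two halves separately, after first recording the structure of $G$. Writing the $8$-cycle $C=(2,6,3,8,5,9,4,7)$, one checks that vertex $1$ is joined exactly to the four odd-position vertices $2,3,4,5$ of $C$ and vertex $10$ exactly to the four even-position vertices $6,7,8,9$. Hence $G$ is bipartite and triangle-free; a quick pass over the adjacency lists shows its only $4$-cycles (as subgraphs) are the eight \emph{faces} $F_1,\dots,F_8$, namely the four ``lower'' quadrilaterals $1\,2\,6\,3$, $1\,3\,8\,5$, $1\,5\,9\,4$, $1\,4\,7\,2$ through $1$ and the four ``upper'' quadrilaterals through $10$; and filling in these quadrilaterals turns $G$ into a cell decomposition of $S^2$ made of two disks glued along $C$. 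In particular $\Hom_1^\Cube(G)\cong\Hom_1^\Path(G)\cong(0)$ by the combination of \cite{BBLL} and \cite{BCW} quoted in \ref{sec-definitions} together with $\Hom_1^\Sing(S^2)=(0)$; I use this freely.

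\emph{The statement $\Hom_2^\Path(G)\cong(0)$.} Here I would compute the low-degree path chain complex directly. Since $G$ is bipartite and triangle-free, the description following \cite[Proposition 4.2]{GLMY1} gives that, modulo degenerate paths, $\CC_2^\Path(G)$ is spanned by the ``bigons'' $(u,v,u)$ with $\{u,v\}\in E(G)$ and by the ``square fillings'' $(u,v,w)-(u,v',w)$ over the $4$-cycles $u\,v\,w\,v'$ of $G$. I would then write down a basis of $Z_2^\Path(G)=\Ker\partial_2^\Path$ and show each basis element is a boundary by exhibiting an explicit preimage in $\CC_3^\Path(G)$. The subtle point is that $\CC_3^\Path(G)=(\partial_3^\Path)^{-1}[\widetilde{\CC}_2^\Path(G)]$, so the candidate $3$-chains must be assembled from honest $4$-paths $(v_0,v_1,v_2,v_3)$ together with correction terms such as $(v_0,v_1,v_0,v_3)$ and bigon-paths, chosen so that the total lies inside $\CC_3^\Path(G)$ — precisely the kind of bookkeeping used in \cite[Proposition 2.12]{GLMY2} and in the proof of \ref{equiv-thm}(ii). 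The argument closes because each non-edge $\{x,y\}$ with two common neighbours $v,v'$ contributes the extra $2$-chain $(x,v,y)-(x,v',y)$, and stringing these together exhibits the ``spherical'' $2$-cycle of $G$ as a path-boundary. I expect the main obstacle in this half to be keeping the candidate $3$-chains inside $\CC_3^\Path(G)$ while still producing all of $Z_2^\Path(G)$.

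\emph{The statement $\Hom_2^\Cube(G)\not\cong(0)$.} First I would write down the candidate $2$-cycle: choose one singular $2$-cube $\sigma_F$ for each face $F$, oriented so that along every edge of $C$ the two faces containing it induce opposite orientations; then $z:=\sigma_{F_1}+\cdots+\sigma_{F_8}$ satisfies $\partial_2^\Cube z=0$ (routine: each edge of $G$ lies in exactly two faces, and on the apex edges $\{1,x\}$ and $\{10,u\}$ the two containing faces lie on opposite sides of the edge within a disk, so their contributions cancel). The crux is $z\notin\Im\partial_3^\Cube$. Since $\Hom_1^\Cube(G)=(0)$, by the universal coefficient theorem this is equivalent to producing a $2$-cocycle, i.e.\ an $R$-linear $\lambda\colon\CC_2^\Cube(G)\to R$ with $\lambda\circ\partial_3^\Cube=0$, such that $\lambda(z)\ne0$. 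The natural choice sets $\lambda(\sigma)=0$ unless the singular $2$-cube $\sigma$ has four distinct vertices — equivalently, unless $\sigma$ parametrizes one of the eight faces, using that $G$ has exactly eight $4$-cycles — and otherwise $\lambda(\sigma)=\pm1$ according to whether the orientation $\sigma(00)\to\sigma(10)\to\sigma(11)\to\sigma(01)$ agrees with the fixed global orientation of $S^2$; then $\lambda(z)=8\ne0$ over $\ZZ$. The real content is $\lambda\circ\partial_3^\Cube=0$, i.e.\ that the face-type contributions to $\partial_3^\Cube\tau$ cancel for every non-degenerate singular $3$-cube $\tau$ of $G$; the mechanism, visible in small cases, is that once one face of $\tau$ parametrizes a face $F$, a second face of $\tau$ in a transverse direction parametrizes the \emph{same} $F$ with the same orientation, and the sign pattern of $\partial_3^\Cube$ cancels the two, the remaining faces of $\tau$ being ``thin'' (having a repeated vertex) because $G$ is triangle-free with only eight $4$-cycles. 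Classifying the $3$-cubes of $G$ finely enough to make this rigorous is the main obstacle; as a robust fallback one can instead carry out a direct rank computation over a field $\mathbf{k}$ — both $\CC_2^\Cube(G)$ and $\CC_3^\Cube(G)$ are finite and, given the rigidity of $G$, highly structured — and verify $\dim_{\mathbf{k}}\Ker\partial_2^\Cube>\operatorname{rank}\partial_3^\Cube$, whence $\Hom_2^\Cube(G;\mathbf{k})\ne(0)$ and $\Hom_2^\Cube(G)\not\cong(0)$. In both halves the heart of the matter is controlling $\Im\partial_3$: although $G$ is combinatorially rigid, it carries many thin $2$- and $3$-cubes, and the point is that these do not trivialize $z$ cubically, whereas in the path setting the corresponding thin and ``diagonal'' chains \emph{do} supply enough fillings to kill the class.
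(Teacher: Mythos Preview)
Your overall architecture matches the paper's: for $\Hom_2^\Path(G)=(0)$ you reduce to bigons, square-fillings, and a single ``spherical'' cycle and bound each explicitly; for $\Hom_2^\Cube(G)\neq(0)$ you exhibit the same eight-face $2$-cycle $z$ and then try to detect it by a linear functional vanishing on $\Im\partial_3^\Cube$. The Path half is essentially the paper's argument in outline.

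The Cube half has a genuine gap in the cocycle verification. Your $\lambda$ assigns $\pm1$ to every injective $2$-cube according to a global $S^2$-orientation; the paper instead fixes a \emph{single} quadrilateral $Q_0=(1,2,3,6)$ and uses the reflection character of $D_4$ on $2$-cubes supported there. These are related (your $\lambda$ is a signed sum of eight copies of the paper's $\Psi$), but your stated cancellation mechanism is wrong. You claim that if one face of a $3$-cube $\tau$ parametrizes a quadrilateral $F$, then a second face parametrizes the \emph{same} $F$ with the \emph{same} orientation, and the $\partial_3$-signs are opposite. The $3$-cube $\tau=(1,2,3,6,1,2,1,2)$ is non-degenerate and has exactly two injective faces, $f_2^+\tau=(3,6,1,2)$ and $f_3^-\tau=(1,2,3,6)$; both parametrize $Q_0$, but with \emph{opposite} orientations and \emph{equal} boundary signs (each appears with coefficient $-1$). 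Cancellation still occurs, but via the other pattern you did not mention. The paper's analysis shows both patterns arise and handles them uniformly via the identity $\chi(F_1)\,\mathrm{sgn}_\partial(F_1)=-\chi(F_2)\,\mathrm{sgn}_\partial(F_2)$, together with a nontrivial argument ruling out a single $Q_0$-supported face (which uses the specific adjacencies of $G$ and does not follow from triangle-freeness alone). Your sketch contains neither ingredient, and your ``remaining faces are thin'' is also false: the $3$-cube $(1,2,3,6,5,1,8,3)$ has four injective faces, two on each of two distinct quadrilaterals.

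There is also a coefficient issue: $\lambda(z)=8$, so your argument only shows $z\notin\Im\partial_3^\Cube$ over rings in which $8\neq 0$. The paper's single-face functional gives $\Psi(\theta)=1$, which works over every nonzero $R$; this is a concrete payoff of localizing to one quadrilateral rather than summing over all eight.
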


\begin{proof} 
We first prove that $\Hom^\Path_2(G) \cong (0)$, by showing explicitly that every $2$-cycle is a boundary. Suppose that 
$\theta\in \CC^\Path_2(G)$ is a $2$-cycle, i.e. $\partial_2 \theta = 0$.  We claim that $\theta$ is a linear combination of cycles of one of the following
three types:
\begin{itemize}
\item[(1)] $(b,a,b) - (a,b,a)$, where $\{a,b\}$ is an edge of $G$,
\item[(2)] $((a,b,c)-(a,d,c)) + ((d,c,b)-(d,a,b)) + (d,a,d) - (c,b,c)$, where $a,b,c,d$, are consecutive vertices of a quadrilateral,
\item[(3)] the cycle
\begin{gather*}
((6,2,1)-(6,3,1))+ ((8,3,1)-(8,5,1)) + ((9,5,1)-(9,4,1))\qquad\qquad \\\ \qquad\qquad+ ((7,4,1)-(7,2,1)) 
 + ((10,6 ,3)- (10,8, 3))  + ((10,8, 5) - (10,9, 5)) \\ \qquad \qquad\qquad\qquad +((10,9,4) - (10,7 ,4)) +((10, 7, 2)-(10, 6, 2)).
\end{gather*}
\end{itemize}
One can visualize $G$ as the $1$-skeleton of a polytope with eight quadrilateral facets. The cycle (3) is obtained by giving each quadrilateral an outward orientation,  then assigning signs to paths around each quadrilateral using a right-hand rule.

Each of (1), (2), and (3) is easily seen to be a boundary (and hence a cycle):
\begin{gather*}
(b,a,b) - (a,b,a) = \partial_3 \big( (a,b,a,b) \big) \\[+1ex]
((a,b,c)-(a,d,c)) + ((d,c,b)-(d,a,b)) + (d,a,d) - (c,b,c) \qquad\qquad\qquad\qquad \\ \qquad\qquad\qquad\qquad\qquad= \partial_3 \big(
(d,a,b,c) - (d,a,d,c) - (d,c,b,c) \big), 
\end{gather*}
and (3) is equal to
\begin{gather*}
 \partial_3 \big((10,6,2,1) - (10 ,6 ,3 ,1) + (10 , 7 ,4 ,1)- (10 , 7 ,2, 1) \qquad\qquad \\ \qquad\qquad+ (10 ,8, 3, 1)- (10,8, 5, 1)+ (10,9, 5, 1) - (10,9 ,4 ,1)\big).
\end{gather*}
To complete the argument, we must show that every $2$-cycle can be expressed as a linear combination of cycles of type (1), (2), and (3).  If 
 $\theta$ is a $2$-cycle, let $(a,b,c)$ be 
 the lexicographically first term in $\theta$ with the following properties: (i) it is injective, i.e. not of the 
form $(a,b,a)$, and (ii) it is not monotone decreasing, i.e. not satisfying $a>b>c$. Let us call an injective term 
$(a,b,c)$ ``bad" if it satisfies property (ii), and ``good" otherwise (i.e. if it is decreasing).

Necessarily, such an $(a,b,c)$ must be paired in $\theta$ with another 
opposite-signed term $(a,d,c)$ where $\{d,c\}$ and $\{a,d\}$ form the edges opposite to 
$\{a,b\}$ and $\{b,c\}$ in a quadrilateral of $G$ (otherwise the term $(a,c)$ in $\partial_2((a,b,c))$ does not cancel). 
Since $(a,b,c)$ is lexicographically first in $\theta$, we must have $b<d$.  
We claim further that $a<d$. If $a<b$ this is immediate; if $a>b<c$, it is easy to check that $a>d$ does not hold 
in any of the eight quadrilaterals in $G$. 

If $\tau$ is the canonical cycle of type (2) above, then since $b<d$ and $a<d$, every injective term in $\tau$ follows $(a,b,c)$ in lexicographic order. Hence
we can use $\tau$ to eliminate $(a,b,c)$ from $\theta$, and by repeating the process eventually arrive at a cycle $\theta^*$ in which every injective term is ``good" , i.e., of the form  $(a,b,c)$ with $a>b>c$.

In the boundary $\partial_2 \theta^*$, all $1$-chains arising from good injective terms must be of the form 
$(x,y)$ with $x>y$.  Hence if $\theta^*$ contains a non-injective term $(a,b,a)$, it must also contain a corresponding term of the form
$(b,a,b)$, and we can cancel them both out by subtracting a cycle of type (2). Eventually we arrive at a cycle
$\theta^{**}$ in which every term is injective and ``good''. 

As noted above, all injective terms must appear in pairs $(a,b,c), (a,d,c)$ arising from one of the eight quadrilaterals in $G$, with opposite-signed coeffients of equal magnitude. 
We may thus regard the equation $\partial_2 \theta^{**} = 0$ as a homogeneous linear system with eight unknowns (corresponding to the ``good'' quadrilateral boundary pairs appearing in (3)) and 16
equations corresponding to the coefficients of the $1$-chains 
\begin{gather*}(2,1), (3,1), (4,1), (5,1), (6,2), (6,3), (7,2), (7,4), (8,3), (8,5), \\ (9,4), (9,5), (10,6),(10,7),(10,8),(10,9).
\end{gather*}
This system has a matrix
\[
\left(
\begin{array}{cccccccc}
 1 & 0 & 0 & -1 & 0 & 0 & 0 & 0 \\
 -1 & 1 & 0 & 0 & 0 & 0 & 0 & 0 \\
 0 & 0 & -1 & 1 & 0 & 0 & 0 & 0 \\
 0 & -1 & 1 & 0 & 0 & 0 & 0 & 0 \\
 1 & 0 & 0 & 0 & 0 & 0 & 0 & -1 \\
 -1 & 0 & 0 & 0 & 1 & 0 & 0 & 0 \\
 0 & 0 & 0 & -1 & 0 & 0 & 0 & 1 \\
 0 & 0 & 0 & 1 & 0 & 0 & -1 & 0 \\
 0 & 1 & 0 & 0 & -1 & 0 & 0 & 0 \\
 0 & -1 & 0 & 0 & 0 & 1 & 0 & 0 \\
 0 & 0 & -1 & 0 & 0 & 0 & 1 & 0 \\
 0 & 0 & 1 & 0 & 0 & -1 & 0 & 0 \\
 0 & 0 & 0 & 0 & 1 & 0 & 0 & -1 \\
 0 & 0 & 0 & 0 & 0 & 0 & -1 & 1 \\
 0 & 0 & 0 & 0 & -1 & 1 & 0 & 0 \\
 0 & 0 & 0 & 0 & 0 & -1 & 1 & 0 \\
\end{array}
\right),
\]
which is easily seen to have rank $7$, and hence all solutions to $\partial \theta^{**} = 0$ are constant multiples of (3).
Since cycles of type (1), (2), and (3) are all boundaries, this completes the proof that every $2$-cycle is a boundary, and hence
$\Hom^\Path_2(G) \cong (0)$.
\end{proof}

Next we turn to proving that $\Hom^\Cube_2(G) \not\cong (0)$, which will be done by finding an explicit $2$-cycle 
$\theta\in \CC_2^\Cube(G)$ that is not a boundary.  Define  
\begin{equation}\label{theta}
\begin{gathered}
\theta =(1,2,3,6) - (1,2,4,7) + (1,3,5,8) - (1,4,5,9) \qquad\qquad \\
\qquad\qquad- (2,6,7,10) + (3,6,8,10) - (4,7,9,10) + (5,8,9,10).
\end{gathered}
\end{equation}
Recall that each sequence denotes a labeling of the canonical $2$-cube by vertices in $G$, proceeding recursively by dimension. For example, $(1,2,3,6)$ represents the labeling
\begin{center}
\begin{tikzpicture}[scale=2]
   \Vertex[x=0 ,y=0, L=$$]{1}
   \Vertex[x=1 ,y=0, L=$$]{2}
   \Vertex[x=0,y=1, L=$$]{3}
    \Vertex[x=1,y=1, L=$$]{6}
   \Edge(1)(2)
    \Edge(1)(3)  \Edge(2)(6)  \Edge(3)(6)  
     \node  [color=blue] at ([shift={(-.2,.2)}]1) {1};
     \node  [color=blue] at ([shift={(-.2,.2)}]2) {2};
     \node  [color=blue] at ([shift={(-.2,.2)}]3) {3};
     \node  [color=blue] at ([shift={(-.2,.2)}]6) {6};
    \end{tikzpicture}
\end{center}
One can interpret $\theta$ as the result of wrapping quadrilaterals around a $3$-polytope in an orientation-preserving way. When labeled properly by their vertex names, each face can be viewed as graph homomorphisms from a $2$-cube into $G$. 

It is easy to check that $\theta$ is a cycle, i.e., $\partial_2 \theta= 0$.
We will show that $\theta$ is not a boundary, by constructing a linear invariant $\Psi$ 
on $\CC^\Cube_2(G)$ that is zero on every $2$-boundary but is nonzero on $\theta$.  
Fix a quadrilateral $Q_0$ in $G$, say $Q_0=(1,2,3,6)$ with vertices listed in increasing order. We say that a $2$-cell
$F = (a,b,c,d)$
 (that is, a $G$-labeled $2$-cube, with labels in the standard reading order) is {\em supported by $Q_0$} if its labels agree with those of $Q_0$ in some order. Since $F$ is a graph map, the permutation $\sigma$ mapping $(1,2,3,6)$ onto $(a,b,c,d)$ is an element of the dihedral group $D_4$. Define the {\em weight} $w(F)$ of $F$ to be $\chi(\sigma)$, where $\chi$ is the reflection character of $D_4$. In other words, $\chi(\sigma) = \pm 1$ according to whether $\sigma$ is a reflection. If $X$ is any $2$-chain, define $\Psi(X)$ to
 be the sum over $X$ of the coefficient of each $2$-cell times the weight $w(F)$ of that cell.  In this computation, $w(F)=0$ if
 $F$ is not supported by $Q_0$.
 
 The following rules define $\Psi$ explicitly on the eight $2$-cells supported by $Q_0$:
 \[
 \Psi: 
 \begin{cases}
 (1,2,3,6) & \longmapsto +1 \\
 (2,6,1,3) & \longmapsto +1 \\
 (3,1,6,2) & \longmapsto +1 \\
 (6,3,2,1) & \longmapsto +1\\
 (1,3,2,6) & \longmapsto -1 \\
 (2,1,6,3) & \longmapsto -1 \\
 (3,6,1,2) & \longmapsto -1 \\
 (6,2,3,1) & \longmapsto -1
 \end{cases}
 \]
 As an illustration, note that $\Psi(\theta) = 1$, since $(1,2,3,6)$ is the only $2$-cell appearing in $\theta$ 
 supported by $Q_0$.
 
 As another illustration, consider the $3$-cell
 \[
 Y = (3,6,1,3,1,2,3,6),
 \]
 which has $Q_0 = (1,2,3,6)$ as its top face.  Its boundary is
 \[
 \partial_3 Y =  (1,2,3,6) - (1,3,3,6) - (3,1,1,3)+(3,6,1,2) - (3,6,1,3) + (6,3,2,6).
 \]
 In this case, two terms are supported by $Q_0$ but they appear with opposite signs when $\Psi$ is applied, and we get 
 $\Psi (\partial_3 Y) = 0$.  This turns out to be a general phenomenon:
 
\begin{Lemma} For any non-degenerate $3$-cell $Y$, we have $\Psi(\partial_3 Y) = 0$. Consequently, $\Psi(X) = 0 $ for any $2$-boundary $X$.
\end{Lemma}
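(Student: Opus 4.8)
The plan is to reduce the statement to a finite, fully explicit check that exploits how rigidly $G$ behaves near the quadrilateral $Q_0 = (1,2,3,6)$. It is convenient to extend the weight $w$ to all singular $2$-cubes by declaring $w(F)=0$ when $F$ is degenerate; this is consistent with $\Psi$ on $\CC_2^\Cube(G)$, since a degenerate $2$-cube carries at most two distinct labels and so is not supported by $Q_0$. With this convention, any $2$-boundary is an $R$-linear combination of chains $\partial_3 Y$ with $Y$ a non-degenerate singular $3$-cube (because $\CC_3^\Cube(G)$ is freely generated by the cosets of such $Y$), so the final ``consequently'' sentence is immediate once the first assertion is known, and it suffices to prove $\Psi(\partial_3 Y)=0$ for a single non-degenerate $Y\colon Q_3\to G$. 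Expanding $\partial_3 Y = -f_1^-Y+f_1^+Y+f_2^-Y-f_2^+Y-f_3^-Y+f_3^+Y$, the quantity $\Psi(\partial_3 Y)$ is the corresponding alternating sum of the six face-weights $w(f_i^{\epsilon}Y)$.

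Next I would record the support criterion. Since the induced subgraph $C_0 := G[\{1,2,3,6\}]$ is exactly the $4$-cycle with edges $\{1,2\},\{2,6\},\{6,3\},\{3,1\}$, a graph map of $Q_2$ onto $\{1,2,3,6\}$ is automatically edge-preserving and realises an element of the dihedral group $D_4$; hence $w(F)\neq 0$ only when the four labels of $F$ are $\{1,2,3,6\}$ with no repetition, in which case $w(F)=+1$ or $-1$ according as the boundary $4$-cycle $(F(00),F(10),F(11),F(01))$ is a cyclic rotation of $(1,2,6,3)$ or of its reverse $(1,3,6,2)$. Thus $\Psi(\partial_3 Y)=0$ automatically unless $Y$ maps some face of $Q_3$ bijectively onto $\{1,2,3,6\}$; call such a face \emph{distinguished}, and note there is nothing to prove if $Y$ has none.

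So assume $Y$ has a distinguished face and fix one. Precomposing $Y$ with an automorphism of $Q_3$ alters $\partial_3 Y$, and hence $\Psi(\partial_3 Y)$, only by a global sign: this is the standard compatibility of the cubical boundary operator with cube automorphisms, together with the fact that precomposing a $2$-cell with an element of $D_4$ multiplies its $\Psi$-value by $\pm 1$ depending only on that element. So I may assume the distinguished face is the bottom face, with $Y(000)=1,\ Y(100)=2,\ Y(010)=3,\ Y(110)=6$. Writing $N(v)$ for the neighbourhood of $v$ in $G$, the remaining labels $(p,q,r,s)=(Y(001),Y(101),Y(011),Y(111))$ then satisfy $p\in\{1\}\cup N(1)$, $q\in\{2\}\cup N(2)$, $r\in\{3\}\cup N(3)$, $s\in\{6\}\cup N(6)$, must fit together so that the top face of $Q_3$ is itself a valid singular $2$-cube, and — after this normalisation — non-degeneracy of $Y$ reduces to $(p,q,r,s)\neq(1,2,3,6)$. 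Now $N(1)=\{2,3,4,5\}$, $N(2)=\{1,6,7\}$, $N(3)=\{1,6,8\}$, $N(6)=\{2,3,10\}$, and in particular $4$ has no neighbour in $\{3,6\}$, $5$ has none in $\{2,6\}$, and each of $7,8,10$ has a unique neighbour inside $\{1,2,3,6\}$. These constraints leave only a short list of admissible tuples $(p,q,r,s)$, and for each I would read off which of the six faces are distinguished, compute their weights and their signs in $\partial_3 Y$, and verify the alternating sum is $0$. The recurring phenomenon is that a second distinguished face always appears, sharing a vertex with the first, and cancels its contribution; I would isolate this as a one-line sublemma (``the signed weights of the distinguished faces through a common vertex of $Q_3$ sum to $0$'') to avoid repetition. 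Alternatively one can forgo the normalisation and organise the same bookkeeping directly by the position of the distinguished face and the values of $Y$ on the complementary vertices.

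The step I expect to be the real obstacle is the subcase in which $(p,q,r,s)$ again lies inside $\{1,2,3,6\}$ — equivalently, in which $Y$ maps all of $Q_3$ into the $4$-cycle $C_0$. There up to all six faces can be distinguished, so one must enumerate the non-degenerate graph maps $Q_3\to C_0$ extending the fixed bottom face (up to the symmetries already used) and check that the signed face-sum vanishes for each. Everything outside this subcase is routine once the adjacency propagation above is set up; the only delicacy is keeping the signs of the weights consistent with the colexicographic vertex ordering and with the $D_4$-action on $Q_0$.
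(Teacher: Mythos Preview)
Your approach is sound and would succeed, but it is organized quite differently from the paper's. You normalize via a cube automorphism to put one distinguished face at the bottom with the standard labeling, then enumerate the admissible top labels $(p,q,r,s)$ and check the signed face-weight sum case by case. The paper instead argues structurally, without normalization: it first shows, by a graph-specific case analysis, that a non-degenerate $3$-cell cannot have exactly one $Q_0$-supported face (so the count is always $0$, $2$, or $4$); it then classifies the configurations (four faces wrapping around one axis, two dihedrally adjacent ``hinged'' faces, or two opposite faces whose labels differ by a quarter-turn) and handles hinged pairs by a uniform identity $\chi(F_1)\,\textrm{sgn}_\partial(F_1)=-\chi(F_2)\,\textrm{sgn}_\partial(F_2)$, verified by computing the reflection character of the transition permutation on each of the twelve edges of $Q_3$. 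Your route is more elementary and sidesteps the dihedral-character bookkeeping, at the price of a longer case list; the paper's route is more conceptual and makes the cancellation mechanism explicit.

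Two caveats on your write-up. First, the claim that precomposing $Y$ by an automorphism $\phi$ of $Q_3$ changes $\Psi(\partial_3 Y)$ by a single global sign is correct (the sign is $\det\phi$, the determinant of $\phi$ as a signed permutation), but calling it ``standard'' overstates things; it deserves a short verification on the generators of $B_3$, since the face maps of $Y\circ\phi$ are the face maps of $Y$ \emph{precomposed with automorphisms of $Q_2$}, and one must check that the resulting $\chi$-factors interact correctly with the boundary signs. Second, your proposed one-line sublemma (``the signed weights of the distinguished faces through a common vertex of $Q_3$ sum to $0$'') is false as stated: there are configurations with exactly two distinguished faces that are \emph{opposite} (for instance top $=(2,6,1,3)$ over bottom $=(1,2,3,6)$) and hence share no vertex, and even in the hinged case the per-vertex sum is nonzero away from the hinge. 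This does not break your argument---the direct enumeration still works, and you already anticipate the opposite-face phenomenon inside the $(p,q,r,s)\subseteq\{1,2,3,6\}$ subcase---but the sublemma should be dropped or replaced by the paper's hinged-pair identity.
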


\begin{proof} 
Suppose that $Y$ is a non-degenerate $3$-cell.  We claim first that the number of $2$-faces supported by $Q_0= (1,2,3,6)$ is equal to $2$ or $4$. 
To prove this, we systematically eliminate the other cases. It is easy to see that the number of $Q_0$-supported $2$-faces cannot be $3, 5$, or $6$, since if two such faces are adjacent dihedrally, the two faces adjacent to both of of those faces have a repeated label, and hence cannot be supported by $Q_0$. 

It remains to show that the number of $Q_0$-supported faces cannot be $1$.  Suppose that $Y$ contains only one $Q_0$-supported face, e.g. as indicated in the following picture where the bottom four vertices are labeled by $1,2,3$ and $6$, and the others are labeled generically by $A,B,C$ and $D$.

\begin{center}
\begin{tikzpicture}[scale=.8]
   \Vertex[x=0 ,y=0]{1}
   \Vertex[x=3 ,y=0]{2}
   \Vertex[x=2,y=2]{3}
    \Vertex[x=5,y=2]{6}
      \Vertex[x=0,y=3]{A}
        \Vertex[x=3,y=3]{B}
          \Vertex[x=2,y=5]{C}
            \Vertex[x=5,y=5]{D}
   \Edge(1)(2)
    \Edge(1)(3)  \Edge(2)(6)  \Edge(3)(6)  
    \Edge(A)(B) \Edge(C)(D) \Edge(A)(C)
    \Edge(B)(D)
    \Edge(1)(A) \Edge(2)(B) \Edge(3)(C) \Edge(6)(D)
    \end{tikzpicture}
\end{center}
The label $B$ can be either $1,2, 6$, or $7$, since these are the vertices adjacent to $2$ in $G$. We will argue that $B$ must be equal to $2$.

First suppose $B=1$.  Then $D$ must be either $2$ or $3$, since $D$ must be adjacent to $1$ and $6$.  If $D=3$, this contradicts the assumption that only one face is $Q_0$-supported. Hence $D=2$. If $D=2$, then $C$ must
be either $1$ or $6$. If $C=1$, we again have two $Q_0$-supported faces and hence a contradiction which implies $C=6$.  Now $A$ must be either $2$ or $3$, since it is a vertex adjacent to $1$ and $6$.  Either choice produces a second $Q_0$-supported face, and hence all cases lead to a contradiction, and thus $B\ne 1$. A similar (symmetrical) argument 
using the front face shows that $B\ne 6$. 

Continuing, suppose that $B=7$. The possible values for $A,C,D$, determined by adjacencies in $G$, are 
$D\in\{2, 10\}, A \in \{2,4\}, C \in \{1,6\}$. Note that the possibilities $C=3$ and $C=8$ are not included because neither is adjacent to either $2$ or $4$ in $G$.  We proceed systematically: If $D=10$, then $C=6$ since $10$ is not adjacent to $1$. This implies $A = 2$, since
$4$ is not adjacent to $6$, yielding a second $Q_0$-supported face, which is a contradiction and hence $D = 2$. If $C=1$, we obtain a second $Q_0$-supported face, hence $C=6$, which forces $A=2$ since $4$ is not adjacent to $6$. This choice produces a second $Q_0$-supported face, and hence a contradiction. This implies $B\ne 7$, which leaves $B=2$ as the only possible choice.

Since $2$ and $3$ are related by an automorphism of $G$, a symmetrical argument shows that $C=3$. From that point it is easy to show that $A=1$ and $D = 6$, thus forcing a second $Q_0$-supported face on top -- as well as a degenerate cube. This double contradiction completes the proof our first claim.

Next we claim that if the number of $Q_0$-supported faces, is $4$, then those faces must ``wrap around'' the $3$-cell, i.e. they avoid one of the three coordinate axes. If the number is $2$, the faces may either be adjacent (and ``hinged"), or opposite, in which case their labels differ by a 90 degree rotation.

We can conclude that if $Y$ is a $3$-cell, then the $Q_0$-supported faces can occur in arrangements of three types. If the number of such faces, is $4$, then those faces must ``wrap around'' the $3$-cell, i.e. they avoid one of the three coordinate axes.  This follows since if two $Q_0$-supported faces are dihedrally adjacent, then the two faces adjacent to both of them
cannot be $Q_0$-supported. If the number of $Q_0$-supported faces is $2$, the faces may either be adjacent (and ``hinged"), or opposite, in which case an easy argument shows that their labels differ by a 90 degree rotation.

With this information in hand, we can proceed to the proof of the main result.  
Suppose that $Y$ is a non-degenerate $3$-cell with two $Q_0$-supported faces $F_1$ and $F_2$ that are dihedrally adjacent. Let $\textrm{sgn}_\partial(F_1)$ and $\textrm{sgn}_\partial(F_2)$ denote the signs associated to $F_1$ and $F_2$ by the boundary operator $\partial_3$. Then we claim that
\begin{equation}\label{psi}
\chi(F_1)\, \textrm{sgn}_\partial(F_1) = - \chi(F_2)\, \textrm{sgn}_\partial(F_2).
\end{equation}
If we prove \ref{psi}, it will follow that $\Psi(\partial_3 Y) = 0$ in two of the three cases, i.e. either four $Q_0$-supported faces or two such faces that are dihedrally adjacent.
We will prove \ref{psi} in the form
\begin{equation}\label{psi2}
\chi(F_1) \chi(F_2) = - \textrm{sgn}_\partial(F_1) \textrm{sgn}_\partial(F_2).
\end{equation}
Since $\chi$ is a multiplicative character, we can regard $\chi(F_1) \chi(F_2)$ as $\chi(\sigma)$ where $\sigma$ is the
permutation mapping a generic set of (distinct) labels on the vertices of $F_1$ to the corresponding set of labels on
$F_2$ obtained by flipping 90 degrees through the dihedral edge. On both faces, the labels are read in standard order (recursively by dimension).

Denote the six faces of $Y$ by $F_{0**}, F_{1**}, F_{*0*}, F_{*1*},F_{**0}, F_{**1}$, with the obvious notation, e.g.,
$F_{*1*}$ denotes the face $\{(x,1,z)\}$, where $0 \le x,z \le 1$.  Let us say that $F$ is a {\em positive face} if $F$
is one of $F_{1**},  F_{*0*}, F_{**1}$, and a {\em negative face} if it is
one of $F_{0**},  F_{*1*}, F_{**0}$.  These designations correspond exactly to the signs of $\textrm{sgn}_\partial(F)$.
Hence \ref{psi2} can be interpreted as saying that
if $F_1$ and $F_2$ are dihedrally adjacent faces of a $3$-cell, both supported by $Q_0$, then
\begin{equation}\label{psi3}
\chi(F_1) \chi(F_2) = 
\begin{cases}
-1 & \textrm{if $F_1$ and $F_2$ are both positive or both negative}\\
+1 & \textrm{otherwise}\\
\end{cases}
\end{equation}

  For each dihedrally adjacent pair $F_1,F_2$, we can compute the left hand side of \ref{psi3} as the reflection character $\chi(\sigma)$ of the permutation $\sigma$ that maps the labeling of $F_1$ onto the labeling of $F_2$, where each labeling is read in the standard order.  There
are 12 cases (one for each edge), which can be grouped into four classes of permutations:  
\begin{itemize}
\item the identity $id$, with $\chi(id) = +1$,
\item a rotation $R$ through 90 degrees, with $\chi(R)=+1$,
\item a reflection $\phi$ around a diagonal axis, with $\chi(\phi) = -1$,
\item a reflection $\psi$ around an horizontal or vertical axis, with $\chi(\psi)=-1$.  
\end{itemize}
These four types are indicated on the edges of the following diagram. Edges in red correspond to permutations with 
$\chi(\sigma) = +1$ and edges in black correspond to permutations with $\chi(\sigma) = -1$.

\begin{center}
\begin{tikzpicture}[scale=1]
   \Vertex[x=0 ,y=0,L=$1$]{1}
   \Vertex[x=3 ,y=0]{2}
   \Vertex[x=2,y=2,L=$3$]{3}
    \Vertex[x=5,y=2]{4}
      \Vertex[x=0,y=3,L=$5$]{5}
        \Vertex[x=3,y=3,L=$6$]{6}
          \Vertex[x=2,y=5,L=$7$]{7}
            \Vertex[x=5,y=5,L=$8$]{8}
   \Edge[style={ultra thick},color=red,label=$id$](1)(2)
    \Edge[style={ultra thick},color=black,label=$\phi$](1)(3)  \Edge[style={ultra thick},color=red,label=$R$](2)(4)  
    \Edge[style={ultra thick},color=black,label=$\psi$](3)(4)  
    \Edge[style={ultra thick},color=black,label=$\psi$](5)(6) \Edge[style={ultra thick},color=red,label=$R$](5)(7) \Edge[style={ultra thick},color=black,label=$\phi$](6)(8)
    \Edge[style={ultra thick},color=red,label=$id$](7)(8)
    \Edge[style={ultra thick},color=red,label=$id$](1)(5) \Edge[style={ultra thick},color=black,label=$\psi$](2)(6) \Edge[style={ultra thick},color=black,label=$\psi$](3)(7) \Edge[style={ultra thick},color=red,label=$id$](4)(8)
    \end{tikzpicture}
\end{center}

\noindent
The values of $\chi(\sigma)$ in each of the four cases can be verified in a straightforward manner.
Furthermore, it is easy to verify that for dihedral edges labeled in the diagram by $id$ or $R$, the corresponding pairs of faces $F_1, F_2$ have opposite boundary parity, i.e. $\textrm{sgn}_\partial(F_1) = - \textrm{sgn}_\partial(F_2)$, and for dihedral edges labeled by $\phi$ or $\psi$, the faces have the same boundary parity.  This is exactly the content of \ref{psi3}.

In order to complete the proof, it is only necessary to consider the case where $Y$ has exactly two opposite $Q_0$-supported faces $F_1$ and $F_2$,  with labels differing by a 90 degree rotation.  In this case $\chi(F_1) = \chi(F_2)$  and 
$\textrm{sgn}_\partial(F_1) = - \textrm{sgn}_\partial(F_2)$, implying \ref{psi} immediately.

We have  shown that $\Psi(\partial_3 Y) = 0$ for every $3$-cell $Y$. Since $\Psi(\theta) = 1$, where $\theta$ is defined in \ref{theta}, it follows that $\theta$ is not a boundary, and hence 
$\Hom^\Cube_2(G) \not\cong (0)$.  Further computation (not included here) shows that $\Hom^\Cube_2(G) \cong R$, i.e. the homology in dimension $2$ is generated by $\theta$.
\end{proof}

In fact, the same graph can be used to show that when $n\ge 3$, the
map $\psi_* : \Hom_n^\Cube(G) \rightarrow \Hom_n^\Path(G)$ defined \ref{sec-map} is not 
is not always surjective. Computations using \cite{Maerte17} for $\Hom^\Cube_\bullet$ and
Mathematica for $\Hom^\Path_\bullet$ (not displayed here) have shown that if $G$ is the graph from \ref{thm:counterexample} then
\[ \Hom_3^\Cube(G) \cong (0) \textrm{ and }\Hom_3^\Path(G) \cong R.
\]
It would be interesting to find self-contained, accessible proofs of these results.

\section{Comments, Examples, and Questions for Further Study\label{sec-questions}}

An important family of examples for which we have less than perfect information consists of the $k$-cycles 
$Z_k$, with $V(Z_k) = \{1,2,\dots, k\}$ and $E(Z_k) = \{ \{i,i+1\} \;|\; 1 \le i <k\} \cup \{\{k,1\}\}$. The following proposition states what we know about these graphs.

\begin{Proposition}\label{cycles}
Let $Z_k$ be a $k$-cycle. Then
\begin{enumerate}
    \item 
If $k=3$ or $k=4$, then 
$\Hom_n^\Cube(Z_k) \cong \Hom_n^\Path(Z_k) \cong (0)$ for all $n>0$.
\item
$\Hom_1^\Cube(Z_5) \cong R$,
$\Hom_2^\Cube(Z_5) \cong (0)$,
$\Hom_3^\Cube(Z_5) \cong (0)$.
\item
$\Hom_1^\Path(Z_k) \cong R$, and
$\Hom_n^\Path(Z_k) \cong (0)$ for $k>5,$ $n\ge 2$.
\end{enumerate}
\end{Proposition}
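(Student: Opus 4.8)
The three parts of the proposition are essentially independent. Part~(1) is immediate from results already in hand: $Z_3$ is the complete graph $K_3$ and $Z_4$ is the complete bipartite graph $K_{2,2}$ (equivalently the hypercube $Q_2$), so in both cases $\Hom_n^\Cube \cong \Hom_n^\Path \cong (0)$ for all $n>0$ by \ref{cor-examples} and the corollary on complete bipartite graphs. For Part~(2), the computation $\Hom_1^\Cube(Z_5)\cong R$ follows from the isomorphism $\Hom_1^\Cube(G)\cong\Hom_1^\Sing(K)$ recalled in \ref{sec-definitions}, where $K$ is obtained from $G$ by filling in all triangles and quadrilaterals: since $Z_5$ is a simple $5$-cycle it has neither, so $K=Z_5$ and $\Hom_1^\Sing(Z_5)\cong R^{|E|-|V|+1}\cong R$. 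For the vanishing statements $\Hom_2^\Cube(Z_5)\cong(0)$ and $\Hom_3^\Cube(Z_5)\cong(0)$ I know of no short conceptual argument; these are finite linear-algebra computations over $\ZZ$ (the ranks and Smith normal forms of the low-degree boundary matrices of $\CC^\Cube_\bullet(Z_5)$), which I would carry out with the software of \cite{Maerte17}, as was done for the counterexample graph in \ref{sec-counter}.

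For Part~(3), the equality $\Hom_1^\Path(Z_k)\cong R$ is immediate from \ref{pr:yaubarcelo} (which gives $\Hom_1^\Path(Z_k)\cong\Hom_1^\Cube(Z_k)$) together with the computation $\Hom_1^\Cube(Z_k)\cong R$ as in Part~(2), valid for every $k\ge5$ since such $Z_k$ has no triangle or quadrilateral. The substantive claim, $\Hom_n^\Path(Z_k)\cong(0)$ for $k>5$ and $n\ge2$, I would establish directly at the chain level. The crucial structural fact is that, for $k\ge5$ and $n\ge2$, the module $\CC_n^\Path(Z_k)$ is free on the \emph{alternating} $n$-paths $(a,b,a,b,\dots)$ and $(b,a,b,a,\dots)$, one pair for each edge $\{a,b\}$. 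The key observation is that any two vertices of $Z_k$ at cyclic distance $2$ are non-adjacent and have a \emph{unique} common neighbour (this fails only for $k=4$). Consequently, if $P=(v_0,\dots,v_n)$ is a non-degenerate path in $Z_k$ and $i$ an interior index with $v_{i-1}\ne v_{i+1}$, then the term $(v_0,\dots,\widehat{v_i},\dots,v_n)$ of $\partial_n P$ contains exactly one ``jump'', i.e.\ a pair of consecutive non-adjacent entries, and the only path of $Z_k$ from which this tuple arises by deleting a single vertex is $P$ itself (one inserts, at the jump, the unique common neighbour of its endpoints). Hence such a tuple cannot cancel in $\partial_n\omega$ for any chain $\omega$, so every $\omega\in\widetilde{\CC}_n^\Path(Z_k)$ with $\partial_n\omega\in\widetilde{\CC}_{n-1}^\Path(Z_k)$ is supported only on alternating paths; conversely every alternating $n$-path lies in $\CC_n^\Path(Z_k)$, since deleting an interior vertex yields a degenerate tuple.

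With this description in hand, the complex $\CC^\Path_\bullet(Z_k)$ in degrees $\ge2$, together with $\CC_1^\Path(Z_k)$ (the module of directed edges), splits as a direct sum over the edges of $Z_k$ of a small complex
\[
\cdots\longrightarrow R^2\xrightarrow{\ M_n\ }R^2\longrightarrow\cdots\longrightarrow R^2\xrightarrow{\ M_3\ }R^2\xrightarrow{\ M_2\ }R^2,
\]
where, on the edge $\{a,b\}$, the map $M_n$ sends $(a,b,a,\dots)\mapsto (b,a,b,\dots)+(-1)^n(a,b,a,\dots)$ and $(b,a,b,\dots)\mapsto (a,b,a,\dots)+(-1)^n(b,a,b,\dots)$, with the analogous formula in degree $2$ (target spanned by $(a,b),(b,a)$). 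A direct check shows each $M_n$ with $n\ge2$ has rank $1$ and $\Ker M_n=\Im M_{n+1}$; this common rank-one submodule is $R\langle(a,b,a,\dots)-(b,a,b,\dots)\rangle$ for $n$ even and $R\langle(a,b,a,\dots)+(b,a,b,\dots)\rangle$ for $n$ odd. Thus the small complex is exact in all degrees $\ge2$, and summing over edges gives $\Hom_n^\Path(Z_k)\cong(0)$ for all $n\ge2$ (in fact for every $k\ge5$).

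I expect the one genuinely delicate point to be the no-cancellation argument underlying the description of $\CC_n^\Path(Z_k)$: one must rule out both that a ``bad'' tuple arising from a non-alternating path coincides with one coming from a different path and that it appears twice in the boundary of a single path. The uniqueness of the common neighbour of distance-$2$ vertices in $Z_k$ is precisely what forces each bad tuple to determine the unique path and position producing it, and packaging this bookkeeping cleanly (by tracking the single jump of each bad tuple) is the heart of the argument. Everything else (the homology of the per-edge complex, Part~(1), and the reductions in Parts~(2) and~(3)) is routine or computational.
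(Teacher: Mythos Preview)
Your proposal is correct and follows essentially the same route as the paper. The paper handles Part~(1) via \ref{cor-examples} exactly as you do, handles Part~(2) by the same reduction for $\Hom_1$ (citing \cite{BKLW,BCW}) and by computer for $\Hom_2,\Hom_3$, and handles Part~(3) by invoking a separate proposition (\ref{no4cycles}) stating that $\Hom_n^\Path(G)\cong(0)$ for $n\ge 2$ whenever $G$ has no $3$- or $4$-cycles. Your per-edge complex computation is precisely the content of that proposition's proof (the paper writes $\partial_n w_{ab}^n = w_{ba}^{n-1}+(-1)^n w_{ab}^{n-1}$ and reads off exactness directly). The one place you add value is the no-cancellation argument pinning down why $\CC_n^\Path(Z_k)$ is spanned by alternating paths: the paper simply asserts this, while you explain it via uniqueness of common neighbours of distance-$2$ vertices; your argument in fact works verbatim for any triangle- and square-free graph, recovering the paper's more general \ref{no4cycles}.
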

\begin{proof}
Since $Z_3$ is a complete graph and $Z_4$ is a $2$-cube, statement (1) follows from \Ref{cor-examples}, in both cases. 
In statement (2), the dimension 1 case follows by an easy direct calculation, or by applying Theorem 2.7 of \cite{BKLW} (see
also Theorem 5.2 in \cite{BBLL}) and Theorem 4.1 of \cite{BCW}.  In dimension 2 and 3, the results were obtained by computer computations which we do not include here.  In statement (3), the dimension 1 case again follows by an easy direct calculation, or by applying statement (2) together with \Ref{pr:yaubarcelo}. The remaining cases in statement (3) follow from a more general result stated in the next proposition.
\end{proof}

\begin{Proposition}\label{no4cycles}
Suppose that $G$ is an undirected graph containing no 3-cycles and no 4-cycles. Then 
$\Hom_n^\Path(G) \cong (0)$ for $n\ge 2$.
\end{Proposition}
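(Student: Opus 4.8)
The plan is to compute $\CC_n^\Path(G)$ explicitly for $n\ge 2$ and observe that the resulting complex is visibly acyclic. For an ordered edge $(a,b)$ of $G$ and $n\ge 1$, write $\rho^n_{a,b}=(a,b,a,b,\dots)\in\LL_n^\Path(G)$ for the alternating $n$-path with $n+1$ vertices, so $\rho^1_{a,b}=(a,b)$ and $\rho^2_{a,b}=(a,b,a)$. The key claim is that for every $n\ge 2$,
\[
\CC_n^\Path(G)=\bigoplus_{\{a,b\}\in E(G)}\bigl(R\,\rho^n_{a,b}\oplus R\,\rho^n_{b,a}\bigr).
\]
Granting this, the proposition drops out: deleting an interior vertex of $\rho^n_{a,b}$ (one in a position $1,\dots,n-1$) produces a path with two equal consecutive entries, hence a degenerate path, so for $n\ge 2$ one computes $\partial_n\rho^n_{a,b}=\rho^{n-1}_{b,a}+(-1)^n\rho^{n-1}_{a,b}$ and likewise for $\rho^n_{b,a}$, and $\partial_2$ sends both $\rho^2_{a,b}$ and $\rho^2_{b,a}$ to $(a,b)+(b,a)$. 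In particular $\partial_n$ respects the edgewise decomposition (with $\CC_1^\Path(G)=\bigoplus_{\{a,b\}}(R(a,b)\oplus R(b,a))$ in degree $1$), so for $n\ge 2$ the group $\Hom_n^\Path(G)$ is a direct sum over edges $\{a,b\}$ of the $n$-th homology of the length-two complex $\cdots\to R^2\to R^2\to\cdots$ whose differential, in the basis $(\rho^m_{a,b},\rho^m_{b,a})$, is $\left(\begin{smallmatrix}1&1\\1&1\end{smallmatrix}\right)$ when $m$ is even and $\left(\begin{smallmatrix}-1&1\\1&-1\end{smallmatrix}\right)$ when $m$ is odd. A one-line check gives $\Ker=\Im$ at every degree $\ge 2$ (both equal $R(1,-1)$ for even degree and $R(1,1)$ for odd degree), and similarly $\Ker(\partial_2|_{\{a,b\}})=R(1,-1)=\Im(\partial_3|_{\{a,b\}})$; hence each summand is acyclic in degrees $\ge 2$ and $\Hom_n^\Path(G)\cong(0)$ for $n\ge 2$.

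Thus everything reduces to the displayed description of $\CC_n^\Path(G)$, and its nontrivial half is that no allowed path other than the $\rho^n_{a,b}$ can occur. This rests on the following observation. Let $n\ge 2$ and let $p=(v_0,\dots,v_n)$ be a non-degenerate allowed $n$-path that is not alternating (equivalently, uses at least three distinct vertices). Then some consecutive triple $v_{i-1},v_i,v_{i+1}$ with $1\le i\le n-1$ consists of three distinct vertices: if every such triple had $v_{i-1}=v_{i+1}$ then $v_0=v_2=\cdots$ and $v_1=v_3=\cdots$ and $p$ would use only two vertices. Since $G$ has no $3$-cycle, $v_{i-1}\not\sim v_{i+1}$, and since $G$ has no $4$-cycle, $v_i$ is the \emph{unique} common neighbour of $v_{i-1}$ and $v_{i+1}$. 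Set $f_p:=(v_0,\dots,v_{i-1},v_{i+1},\dots,v_n)$, the face of $p$ obtained by deleting $v_i$. Then $f_p$ is non-degenerate (its only new consecutive pair $v_{i-1},v_{i+1}$ consists of distinct vertices) and not allowed (that pair is not an edge); it is the only interior face of $p$, so $f_p$ appears in $\partial_n p$ with coefficient $(-1)^i\ne 0$; and $p$ is the \emph{only} non-degenerate allowed $n$-path whose boundary involves $f_p$. For the last point: if $q$ is non-degenerate and allowed with $f_p$ a face of $q$, then $q$ is obtained from $f_p$ by inserting one vertex, and since $(v_{i-1},v_{i+1})$ is the only non-edge among the consecutive pairs of $f_p$, the insertion must occur between $v_{i-1}$ and $v_{i+1}$; the inserted vertex is then a common neighbour of $v_{i-1}$ and $v_{i+1}$, hence equals $v_i$, forcing $q=p$.

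From this the description of $\CC_n^\Path(G)$ is immediate. Given $\theta\in\CC_n^\Path(G)\subseteq\widetilde\CC_n^\Path(G)$, write $\theta=\sum c_q\,q$ as a combination of cosets of non-degenerate allowed paths. For a path $p$ as above, the coefficient of the non-degenerate, non-allowed path $f_p$ in $\partial_n\theta$ equals $(-1)^i c_p$; since $\partial_n\theta\in\widetilde\CC_{n-1}^\Path(G)$ that coefficient vanishes, so $c_p=0$. Hence only alternating paths survive in $\CC_n^\Path(G)$, and conversely each $\rho^n_{a,b}$ does lie in $\CC_n^\Path(G)$, because (as computed above) $\partial_n\rho^n_{a,b}$ is a $\pm1$-combination of the allowed paths $\rho^{n-1}_{a,b}$ and $\rho^{n-1}_{b,a}$, hence lies in $\widetilde\CC_{n-1}^\Path(G)$.

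The main obstacle is the structural observation of the third paragraph, and specifically the three verifications bundled into it: that the single face $f_p$ is non-degenerate, that it is not allowed, and that it is a face of no allowed path besides $p$. This is precisely where both hypotheses enter — triangle-freeness produces the non-edge $v_{i-1}v_{i+1}$, and $4$-cycle-freeness produces the uniqueness of the common neighbour $v_i$; without the latter a second allowed path could contribute $f_p$ and the cancellation argument would fail (consistent with the fact that the conclusion is false for graphs containing quadrilaterals). The remaining ingredients — the boundary formula for $\rho^n_{a,b}$ and the homology of the tiny per-edge complexes — are routine.
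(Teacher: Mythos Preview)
Your proof is correct and follows the same approach as the paper --- both reduce to showing that $\CC_n^\Path(G)$ for $n\ge 2$ is spanned by the alternating paths and then verify acyclicity of the resulting per-edge complexes; the paper simply asserts the first step without argument, while you justify it carefully (and this is precisely where the no-$3$-cycle and no-$4$-cycle hypotheses enter, so your version is the more complete one). One minor wording slip: ``it is the only interior face of $p$'' is not literally true, since $p$ has $n-1$ interior faces; what you need (and what holds) is that the faces of a non-degenerate path are pairwise distinct, so $f_p$ occurs in $\partial_n p$ with coefficient exactly $(-1)^i$.
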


\begin{proof}
If $n\ge 2$, then every generator of $\CC^\Path_n(G)$ must be an alternating path, that is,  it has the form $(a,b,a,b,\cdots ,b)$ or 
$(a,b,a,b,\cdots, a)$ for some pair $\{a,b\}\in E(G)$. Denote this path by $w_{ab}^n$, where $a$ and $b$ are the first two elements and the final element is determined by the parity of $n$. An easy computation shows that
\begin{equation}\label{wbdy}
\partial_n w^n_{ab}=
w^{n-1}_{ba} + (-1)^n w^{n-1}_{ab}
.
\end{equation}
Suppose that 
\[
\gamma = 
\sum_
{\{a,b\}\in E(G)}
c_{ab}\, w^n_{ab}
+ c_{ba} \,w^n_{ba}
\]
is a cycle in $\CC_n^\Path(G)$.
It follows from \Ref{wbdy} that
\[
c_{ba} + (-1)^{n} c_{ab} = 0
\]
for all $\{a,b\}\in E(G)$,  implying that $\gamma$ may be expressed as a linear combination of terms of the form
\[
w_{ba}^n + (-1)^{n+1} w_{ab}.
\]Since
$w_{ba}^{n+1} + (-1)^{n+1} w_{ab}
= \partial_{n+1} w^{n+1}_{ba}$, every cycle $\gamma\in \CC_n^\Path(G)$ is a boundary, and hence $\Hom_n^\Path(G) = (0).$
\end{proof}

We conjecture that \Ref{no4cycles} also holds for $\Hom_n^\Cube(G)$:

\begin{Conjecture}
Suppose that $G$ is an undirected graph containing no 3-cycles and no 4-cycles. Then 
$\Hom_n^\Cube(G) \cong (0)$ for $n\ge 2$.
\end{Conjecture}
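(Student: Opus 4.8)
The plan is to identify $\Hom_n^\Cube(G)$ with the group homology $\HO_n(\pi_1(G);R)$ of the ordinary (topological) fundamental group of the geometric realization of $G$, and then to invoke the classical fact that $\pi_1(G)$ is a free group, so that $\HO_n(\pi_1(G);R)=0$ for $n\geq 2$. The hypothesis that $G$ has no $3$-cycles and no $4$-cycles enters at exactly one place, a lifting lemma; everything else is standard. If $G$ is a forest the statement is already contained in \ref{cor-examples} and \ref{disjointsum}, so we may assume $G$ is connected of girth at least $5$.

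First I would set up the covering. Let $p\colon\widetilde G\to G$ be the universal cover; since $G$ is a graph, $\widetilde G$ is a tree, $p$ is a graph covering map, and the deck group $\Gamma=\pi_1(G)$ is free. The crucial lemma is that \emph{every} singular $n$-cube $\sigma\colon Q_n\to G$ lifts to a graph homomorphism $\widetilde\sigma\colon Q_n\to\widetilde G$. By the standard lifting criterion this amounts to showing that $\sigma_\ast\colon\pi_1(Q_n)\to\pi_1(G)$ is trivial. Now $\pi_1(Q_n)$ is normally generated by the boundary loops of the $2$-dimensional faces of $Q_n$, because filling in those squares already makes $Q_n$ simply connected. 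Each such boundary is a $4$-cycle in $Q_n$, and $\sigma$ carries it to a closed walk of length at most $4$ in $G$, which is null-homotopic because $G$ has girth at least $5$ (every closed walk shorter than the girth is null-homotopic). Hence $\sigma_\ast$ kills a generating set of $\pi_1(Q_n)$, and $\sigma$ lifts.

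Next I would run a descent argument. Since $\Gamma$ acts freely on $V(\widetilde G)$, it acts freely on the set of non-degenerate singular cubes of $\widetilde G$, so each $\CC^\Cube_n(\widetilde G)$ is a free $R[\Gamma]$-module. Moreover $\CC^\Cube_\bullet(\widetilde G)$, together with its augmentation onto $R$, is a resolution: any cycle in $\CC^\Cube_\bullet(\widetilde G)$ has finite support, hence lies in $\CC^\Cube_\bullet(F)$ for some finite subforest $F\subseteq\widetilde G$, and is a boundary there by \ref{cor-examples} and \ref{disjointsum}. The covering map induces a $\Gamma$-equivariant chain map $\CC^\Cube_\bullet(\widetilde G)\to\CC^\Cube_\bullet(G)$; using the lifting lemma for surjectivity, and the fact that $p$ is a local isomorphism to see that $p$ preserves non-degeneracy in both directions and that two cubes of $\widetilde G$ share an image exactly when they lie in one $\Gamma$-orbit, this chain map descends to an isomorphism of chain complexes $\CC^\Cube_\bullet(G)\cong\CC^\Cube_\bullet(\widetilde G)\otimes_{R[\Gamma]}R$. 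Combining the two inputs,
\[
\Hom_n^\Cube(G)\;\cong\;\HO_n\!\left(\CC^\Cube_\bullet(\widetilde G)\otimes_{R[\Gamma]}R\right)\;\cong\;\mathrm{Tor}_n^{R[\Gamma]}(R,R)\;\cong\;\HO_n(\Gamma;R),
\]
which vanishes for $n\geq 2$ since $\Gamma$ is free. As a byproduct one recovers $\Hom_1^\Cube(G)\cong\Gamma^{\mathrm{ab}}\cong R^{\,|E(G)|-|V(G)|+1}$, in agreement with the corollary following \ref{pr:yaubarcelo} and with \ref{cycles}(2).

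The main obstacle, and the step I would write out most carefully, is the lifting lemma of the second paragraph: this is where the girth hypothesis is genuinely needed, and it rests on the two topological inputs (generation of $\pi_1(Q_n)$ by square boundaries, and null-homotopy of short closed walks) being stated precisely. The main technical nuisance is the degeneracy bookkeeping in the descent step — one must verify that $p$ and its iterated face maps interact with the degenerate submodules correctly, so that the equivariant chain map really induces an isomorphism on $\Gamma$-coinvariants. The remaining ingredients are standard homological algebra of free groups and elementary covering-space theory of graphs.
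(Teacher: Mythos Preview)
This statement is presented in the paper as an open \emph{Conjecture}; there is no proof in the paper to compare against. Your covering-space argument is a genuine proof, and nothing resembling it appears in the paper (the parallel result for path homology, \ref{no4cycles}, is proved there by an elementary direct computation with alternating paths that has no analogue on the cubical side).

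The argument is sound, but several steps deserve more care than the sketch gives them. The reduction to connected $G$ should go through \ref{disjointsum} first, before invoking \ref{cor-examples} on tree components. In the lifting lemma, graph homomorphisms in this paper may collapse edges, so the realization $|\sigma|\colon|Q_n|\to|G|$ is not a cellular map in the usual sense; you should spell out why the continuous lift produced by the covering criterion again comes from a graph homomorphism $\widetilde\sigma\colon Q_n\to\widetilde G$ (the point is that $p$ is a local graph isomorphism, so an edge of $|Q_n|$ that $|\sigma|$ collapses is collapsed by the lift, while one mapped homeomorphically onto an edge of $|G|$ is lifted to a single edge of $|\widetilde G|$). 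For the acyclicity of $\CC^\Cube_\bullet(\widetilde G)$ you are implicitly extending the paper's definitions to the infinite tree $\widetilde G$; this should be said, together with the observation that the inclusion of a finite induced subforest induces an injective chain map, so that \ref{cor-examples} and \ref{disjointsum} apply. In the descent step, the two-way preservation of non-degeneracy follows cleanly from the fact that the covering $p$ never collapses edges: if $p\widetilde\sigma$ collapses every $i$th-direction edge then so must $\widetilde\sigma$, hence $f_i^+(p\widetilde\sigma)=f_i^-(p\widetilde\sigma)$ forces $f_i^+\widetilde\sigma=f_i^-\widetilde\sigma$. With these points in place the identification $\Hom_n^\Cube(G)\cong \mathrm{Tor}_n^{R[\Gamma]}(R,R)\cong H_n(\pi_1(G);R)$ holds, and vanishing for $n\ge 2$ follows from the fact that free groups have cohomological dimension at most one.
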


Although our computational evidence is somewhat limited, it seems natural to conjecture that a  weaker property holds for all graphs $G$ and for both cubical and path homology.
\begin{Conjecture}
For any undirected graph $G$, there exists an integer $N$ such that we have $\Hom_n^\Cube(G)\cong \Hom_n^\Path(G) \cong (0)$ for $n\ge N$.
\end{Conjecture}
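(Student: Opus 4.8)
The natural line of attack is an induction on $|V(G)|$ via a Mayer--Vietoris decomposition around a single vertex, proving the cubical and path statements in parallel. Fix a vertex $v\in V(G)$, let $N=N_G(v)$ be its set of neighbours, and set $A=G\setminus v$ (the induced subgraph on $V(G)\setminus\{v\}$) and $B=G[\{v\}\cup N]$. Since $v$ is adjacent to every vertex of $N$, the graph $B$ is precisely the cone $\{v\}\ast G[N]$, and one checks at once that $A\cup B=G$ and $A\cap B=G[N]$ as induced subgraphs. Writing $\Hom_\bullet$ for either $\Hom^\Cube_\bullet$ or $\Hom^\Path_\bullet$, the plan is to establish, for this decomposition, a Mayer--Vietoris exact sequence
\begin{equation*}
\cdots \longrightarrow \Hom_n(A\cap B)\longrightarrow \Hom_n(A)\oplus\Hom_n(B)\longrightarrow \Hom_n(G)\longrightarrow \Hom_{n-1}(A\cap B)\longrightarrow\cdots .
\end{equation*}

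Granting such a sequence, the conjecture follows cheaply. By \ref{cor-join} the cone $B=\{v\}\ast G[N]$ has trivial reduced homology $\widetilde{\Hom}_n$ in every dimension, so the sequence collapses to a long exact sequence relating $\widetilde{\Hom}_\bullet(G)$, $\widetilde{\Hom}_\bullet(A)$ and $\widetilde{\Hom}_\bullet(G[N])$; in particular $\widetilde{\Hom}_n(G)=0$ whenever $\widetilde{\Hom}_n(A)=0$ and $\widetilde{\Hom}_{n-1}(G[N])=0$. As $A$ and $G[N]$ each have strictly fewer vertices than $G$, an induction on $|V(G)|$ with the explicit bound ``$\widetilde{\Hom}_n(H)=0$ for $n\ge |V(H)|$'' now closes: since $|V(A)|=|V(G)|-1$ and $|V(G[N])|=|N|\le|V(G)|-1$, we obtain $\widetilde{\Hom}_n(G)=0$ for $n\ge|V(G)|$, simultaneously for the cubical and the path theory. (The base case $|V(G)|=1$ is trivial.)

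The whole burden thus falls on the two Mayer--Vietoris sequences. For path homology this is close to known territory. One starts from the short exact sequence of chain complexes $0\to \CC^\Path_\bullet(A)\cap\CC^\Path_\bullet(B)\to \CC^\Path_\bullet(A)\oplus\CC^\Path_\bullet(B)\to \CC^\Path_\bullet(A)+\CC^\Path_\bullet(B)\to 0$, all terms being subcomplexes of $\CC^\Path_\bullet(V)$. The intersection is readily identified with $\CC^\Path_\bullet(G[N])$, so the only real point is that the inclusion $\CC^\Path_\bullet(A)+\CC^\Path_\bullet(B)\hookrightarrow \CC^\Path_\bullet(G)$ is a quasi-isomorphism. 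Concretely, an allowed path of $G$ meets $v$ only in segments $u\!-\!v\!-\!w$ with $u,w\in N$, and one wants to rewrite any chain of $\CC^\Path_n(G)$, up to boundaries, as a sum of chains that stay inside $A$ or inside $B$, by ``cutting'' at vertices of $N$; carrying this out compatibly with $\partial^\Path$ and with the definition $\CC^\Path_n(G)=\partial_n^{-1}[\widetilde{\CC}^\Path_{n-1}(G)]$ is technical, but constructions of exactly this flavour appear in the work of Grigor'yan--Lin--Muranov--Yau.

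The cubical Mayer--Vietoris sequence is the main obstacle, and is not available off the shelf: a singular $n$-cube $\sigma:Q_n\to G$ whose image meets both $A\setminus B$ and $B\setminus A$ need not factor through either piece, and $Q_n$ cannot obviously be cut along the ``wall'' $G[N]$. The expected remedy is a cube-subdivision operator, imitating the classical proof of excision in singular homology: replace $Q_n$ by a finer cubical grid, build a chain map $\mathrm{sd}:\CC^\Cube_\bullet(G)\to\CC^\Cube_\bullet(G)$ together with a chain homotopy $\mathrm{sd}\simeq\mathrm{id}$, and argue that after finitely many subdivisions every cube lies in $A$ or in $B$. The discrete homotopy theory of \cite{BKLW,BBLL} does permit subdivision of the domain cube, so there is reason to hope this goes through, but the bookkeeping with degenerate cubes and with $\partial^\Cube$ looks delicate and, to our knowledge, has not been carried out. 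We note that a cubical Mayer--Vietoris would also resolve the preceding two conjectures -- in particular it would give $\Hom^\Cube_n(Z_k)\cong(0)$ for all $n\ge 2$ -- which is simultaneously evidence that this is the right route and that it is genuinely hard. If a full excision theorem is out of reach, a possible fallback is to prove only the long exact sequence of the pair $(G,A)$ together with an identification $\Hom_n(G,A)\cong\widetilde{\Hom}_{n-1}(G[N])$ (using that $B$ is a cone and $A\cap B=G[N]$); this still needs an excision-type input, but only in the gentler form that a cube whose boundary already avoids $v$ can be pushed off $v$ up to homology.
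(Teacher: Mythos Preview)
The statement you are addressing is a \emph{conjecture} in the paper, not a theorem; the paper offers no proof and explicitly presents it as an open problem motivated by limited computational evidence. So there is nothing to compare your argument against: any complete proof would be new.

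Your proposal is an honest \emph{program}, not a proof, and you say as much. The inductive scheme via a vertex-star cover $G=A\cup B$ with $B$ a cone is a natural idea, and the numerics of the induction (yielding $\widetilde{\Hom}_n(G)=0$ for $n\ge |V(G)|$) are correct \emph{given} the long exact sequence. But the entire content lies in the two Mayer--Vietoris sequences, and neither is established. For $\Hom^\Cube$ you already concede that excision/subdivision is the crux and is not known; indeed, as you observe, it would immediately settle the two preceding conjectures in the paper as well, which is strong evidence that this step is genuinely open rather than merely ``delicate bookkeeping''. For $\Hom^\Path$ you are also hand-waving: the identification $\CC^\Path_\bullet(A)\cap\CC^\Path_\bullet(B)=\CC^\Path_\bullet(G[N])$ is not automatic because $\CC^\Path_n$ is defined as a $\partial$-preimage inside $\widetilde{\CC}^\Path_n$, and the claim that $\CC^\Path_\bullet(A)+\CC^\Path_\bullet(B)\hookrightarrow\CC^\Path_\bullet(G)$ is a quasi-isomorphism is exactly the hard step---the existing Mayer--Vietoris-type results for path homology in the Grigor'yan--Lin--Muranov--Yau framework require hypotheses on the cover that you have not checked for this particular $A,B$.

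In short: the paper has no proof; your outline identifies a plausible route but leaves the two decisive lemmas (cubical excision and path-homology Mayer--Vietoris for a vertex-star cover) unproved, and these are precisely the obstacles that make the statement a conjecture rather than a theorem.
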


In cases where the methods of Sections 3 and 4 do not apply, computation of $\Hom_n^\Cube(G)$ remains a significant challenge, since the problem size increases 
rapidly with dimension. For example, when $G = Z_5$, $\rank \CC_3^\Cube(G) = 2230$ and
$\rank \CC_4^\Cube(G) = 978350$.  For the
graph $G$  in \ref{thm:counterexample}, 
$\rank \CC_3^\Cube(G) = 21552$ and
$\rank \CC_4^\Cube(G) = 21745744$. It would be useful to develop more effective tools to compute $\Hom^\Cube_n(G)$ for all $n$ in these cases.



\bibliographystyle{siam}
\bibliography{DiscHomo}
\end{document}